\renewcommand{\fnum@figure}{Fig. \thefigure}
\definecolor{labelkey}{rgb}{0,0.08,0.45}
\definecolor{refkey}{rgb}{0,0.6,0.0}
\definecolor{Brown}{rgb}{0.45,0.0,0.05}
\definecolor{dgreen}{rgb}{0.00,0.49,0.00}
\definecolor{dblue}{rgb}{0,0.08,0.75}
\def\HH{\mathcal H}
\def\HHH{\boldsymbol{\mathcal H}}
\def\CCX{\bigtimes_{i\in I}C_i}
\def\CCN{\bigcap_{i\in I}C_i}
\def\CCM{\sum_{i\in I}\omega_iC_i}
\def\DD{\boldsymbol{D}}
\def\bz{\boldsymbol{z}}
\def\bv{\boldsymbol{v}}
\def\by{\boldsymbol{y}}
\def\bx{\boldsymbol{x}}
\def\bp{\boldsymbol{p}}
\def\bc{\boldsymbol{c}}
\def\G0{\Gamma_0(\mathcal G)}
\def\H0{\Gamma_0(\mathcal H)}
\newcommand{\NN}{\ensuremath{\mathbb N}}
\newcommand{\RR}{\ensuremath{\mathbb{R}}}
\newcommand{\menge}[2]{\big\{{#1}~\big |~{#2}\big\}}
\newcommand{\Menge}[2]{\left\{{#1}~\left|~{#2}\right.\right\}}
\newcommand*{\Id}{\text{\normalfont Id}}
\newcommand{\proj}{\ensuremath{\text{\rm Proj}\,}}
\newcommand{\lmo}{\ensuremath{\operatorname{LMO}}}
\newcommand{\dist}{\ensuremath{\text{\rm dist}\,}}
\newcommand{\epi}{\ensuremath{\text{\rm epi}\,}}
\newcommand{\gr}{\ensuremath{\text{\rm gra}\,}}
\newcommand{\Argmin}{\ensuremath{{\text{\rm Argmin}}\,}}
\newcommand{\Scal}[2]{\bigg\langle{#1}\;\bigg|\:{#2}\bigg\rangle}
\newcommand{\scal}[2]{{\langle{{#1}\mid{#2}}\rangle}}
\newcommand{\FW}{\mbox{F-W }}
\crefname{hypothesis}{Hypothesis}{Hypotheses}
\title{Splitting the Conditional Gradient
Algorithm
}
\author{Zev Woodstock\footnotemark[3] \thanks{ Department of
Mathematics and Statistics, James Madison University, Harrisonburg,
VA, USA (\mbox{\email{woodstzc@jmu.edu}}),
\url{https://zevwoodstock.github.io/}).}  
\and Sebastian Pokutta \thanks{
Department of AI in Society, Science,
and Technology, Zuse Institute Berlin, Berlin, DE and
Institute of Mathematics, Technische
Universit\"at Berlin, Berlin, DE
(\mbox{\email{\{woodstock,pokutta\}@zib.de}}).
}}
\begin{document}

\maketitle

\begin{abstract}
We propose a novel generalization of the conditional gradient (CG /
Frank-Wolfe) algorithm for minimizing a smooth function $f$ under
an intersection of compact convex sets, using a first-order oracle
for $\nabla f$ and linear minimization oracles (LMOs) for the
individual sets. Although this computational framework presents
many advantages, there are only a small number of algorithms which
require one LMO evaluation per set per iteration; furthermore,
these algorithms require $f$ to be convex. Our algorithm appears to
be the first in this class which is proven to also converge in the
nonconvex setting. Our approach combines a penalty method and a
product-space relaxation. We show that one conditional gradient
step is a sufficient subroutine for our penalty method to converge,
and we provide several analytical results on the product-space
relaxation's properties and connections to other problems in
optimization. We prove that our average Frank-Wolfe gap converges
at a rate of $\mathcal{O}(\ln t/\sqrt{t})$, -- only a log factor
worse than the vanilla CG algorithm with one set.
\end{abstract}

\begin{keywords}
Conditional gradient,
splitting,
nonconvex,
Frank-Wolfe,
projection free
\end{keywords}

\begin{MSCcodes}
46N10, 65K10, 90C25, 90C26, 90C30
\end{MSCcodes}

\section{Introduction}

Given a smooth function $f$ which maps from a real Hilbert
space $\HH$ to $\mathbb{R}$ and a finite collection of $m$
nonempty compact convex subsets $(C_i)_{i\in I}$ of $\HH$, we seek
to solve the following: 
\begin{equation}
\label{e:p}
\text{minimize}\;\; f(x)\quad\text{subject to}\quad
x\in\bigcap_{i\in I}C_i,
\end{equation}
which has many applications in imaging, signal processing, and data
science \cite{Cens15,Comb11,Fall19}. 
Classical projection-based
algorithms can be used to solve \eqref{e:p} if given access to the
operator $\proj_{\bigcap_{i\in I} C_i}$. However, in practice, 
computing a projection onto $\bigcap_{i\in I} C_i$ is either
impossible or numerically costly, and utilizing the individual
projection operators $(\proj_{C_i})_{i\in I}$ is more tractable.
This issue has given rise to the
advent of {\em splitting}
algorithms, which seek to solve \eqref{e:p} by utilizing operators
associated with the individual sets -- not their intersection.
Projection-based splitting algorithms -- which use the collection
of operators $(\proj_{C_i})_{i\in I}$ instead of
$\proj_{\bigcap_{i\in
I}C_i}$ -- have made previously-intractable
problems of the form \eqref{e:p}
solvable with simpler tools on a larger scale
\cite{Cens15,Comb11,Comb21}. 

While splitting methods have successfully been applied to
projection-based algorithms, relatively little has been done for
the splitting of {\em conditional gradient} (CG / Frank-Wolfe)
algorithms. Standard CG algorithms minimize a smooth function
$f\colon\mathbb{R}^n\to\mathbb{R}$ over one closed
convex constraint set $C\subset\mathbb{R}^n$. While the iterates of
this algorithm do not converge in general \cite{Bolt22}, at
iteration $t\in\NN$,
the average Frank-Wolfe gap (which is closely related to
showing Clarke stationarity~\cite{CGsurvey}) converges at a rate of
$\mathcal{O}(1/\sqrt{t})$, and the primal
gap converges at a rate of $\mathcal{O}(1/t)$ when $f$ is convex
\cite{Pedr20}. A key ingredient of
these algorithms is the {\em linear minimization oracle}, $\lmo_C$,
which computes for a linear objective $c\in\mathbb{R}^n$ a point
in $\Argmin_{x\in C}\langle c\,,\,x\rangle$. Similarly to
traditional projection-based methods, computing $\lmo_{\CCN}$ is
often prohibitively costly, so an algorithm which relies on the
individual operators $(\lmo_{C_i})_{i\in I}$ would be more
tractable.

In principle, if two sets $C_1$ and $C_2$ are polytopes, one could
compute $\lmo_{C_1\cap C_2}$ by
solving a linear program which incorporates the LP formulations of
both $C_1$ and $C_2$. 
However,
since the number of inequalities in an LP formulation can scale
exponentially with dimension \cite{CGsurvey,Roth17}, LPs are
usually only used to implement a polyhedral LMO if there
are no alternatives. In reality, many polyhedra used in applications,
e.g., the Birkhoff polytope and the $\ell_1$ ball, have highly
specialized algorithms for computing their LMO which are faster
than using a linear program \cite{Comb21LMO}. Hence, splitting
algorithms which rely on evaluating the specialized algorithms for
$(\lmo_{C_i})_{i\in I}$ gain the favorable scalability of 
existing LMO implementations.

Conditional gradient methods have seen a resurgence in
popularity since, particularly for high-dimensional settings, LMOs
can be more computationally efficient than projections. For
instance, a common constraint in matrix completion problems is the
spectrahedron
\begin{equation}
\label{e:spec}
S=\{x\in\mathbb{S}^n_{+}\,|\,\text{Trace}(x)=1\},
\end{equation}
where $\mathbb{S}_{+}^n$ is the set of positive semidefinite
$n\times n$ matrices. Evaluating $\proj_{S}$ requires a full
eigendecomposition, while computing $\lmo_S$ only requires
determining a dominant eigenpair \cite{Garb21}. Clearly, there are
high-dimensional settings where evaluating $\lmo_S$ is possible
while $\proj_S$ is too costly \cite{Comb21LMO}. Thus, we are particularly motivated
by high-dimensional
problems in data science (e.g., cluster analysis, graph
refinement, and matrix decomposition) with these LMO-advantaged
constraints, e.g., the nuclear norm ball, the
Birkhoff polytope of doubly stochastic matrices, and the $\ell_1$ ball
\cite{CGsurvey,Ding22,Garb21,Gide18,Rich12,Yang16,Zhan22}.

\begin{example}[Sparse-and-low-rank decomposition]
Let $\tau_1,\tau_2 >0$, and consider the setting when
$\HH=\mathbb{R}^{n\times p}$, $m=2$, 
$C_1$ is the $\ell_1$ ball of radius $\tau_1$, and $C_2$ is the
nuclear norm ball of radius $\tau_2$. This intersection describes a
convex approximation of a set of simultaneously sparse and low-rank
matrices, and it is used for covariance estimation, graph
denoising, and link prediction (e.g., for protein interactions)
\cite{Rich12,Gide18}.
\end{example}

Inexact proximal splitting methods are a natural choice for
solving
\eqref{e:p} in our computational setting, since LMO-based subroutines
can approximate a projection. In the convex case, this approach
appears in \cite{HeHa15,Kolm21,LiuL19,Mill21}. However,
there is often no bound on the number of LMO calls required to meet
the relative error tolerance required of the subroutine, e.g., 
in \cite{Mill21}. Methods which require increasingly-accurate
approximations can drive the number of LMO calls in each subroutine
to infinity
\cite{LiuL19}, and even if a bound on the number of LMO calls
exists, it often depends on the conditioning of the projection
subproblem.

We are interested in algorithms with low iteration complexity,
since they are more tractable on large-scale problems. It
appears that, for this computational setting, the lowest iteration
complexity currently requires one LMO per set per iteration
\cite{Brau22,Gide18,LanR21,MuZh16,Fall19,Yurt19}.
To the best of our knowledge, all algorithms in this class
are restricted to the convex setting. The case when $f=0$
is addressed by \cite{Brau22},
the case when $(C_i)_{i\in I}$ have additional structure is
addressed in \cite{LanR21}, and a matrix recovery problem is
addressed in \cite{MuZh16}. The approaches in
\cite{Gide18,Fall19,Yurt19} essentially show that one CG step is
a sufficient subroutine for an inexact augmented Lagrangian (AL)
approach. These
methods prove convergence of different optimality
criteria at various rates, e.g., arbitrarily close to
$\mathcal{O}(t^{-1/3})$
\cite{Fall19}, $\mathcal{O}(1/\sqrt{t})$ \cite{Yurt19,LanR21}, and
(under restrictions on $m$ or $(C_i)_{i\in I}$) $\mathcal{O}(1/t)$
\cite{Brau22,Gide18,MuZh16}. All of these methods, similarly to
many projection-based splitting algorithms, achieve
approximate feasibility in the sense that a point in the
intersection $\CCN$ is only found asymptotically. 

Our contributions are as follows. We propose a new algorithm in
this class for solving \eqref{e:p} which requires one LMO per set
per iteration. Our algorithm generalizes the vanilla CG algorithm
in the sense that, when $m=1$, both algorithms are identical. It
appears that our algorithm is the first in this
class possessing convergence guarantees for solving \eqref{e:p} in
the setting when $f$ is nonconvex. As is standard in the CG
literature, we
analyze convergence of the average of Frank-Wolfe gaps, and we
prove a rate of $\mathcal{O}(\ln t/\sqrt{t})$ -- only a log factor
slower than the rate for nonconvex CG over a single constraint
($m=1$)
\cite{Pedr20}. We also prove primal gap convergence for the
convex case. Our theory deviates from the AL approach and shows
convergence with direct CG analysis, without imposing additional
structure on our
problem. By recasting \eqref{e:p} in a product space, we derive a
penalized relaxation which is tractable with the vanilla CG
algorithm. The convergence rates in our analysis pertain to our
penalized function, which includes both primal function value and
feasibility terms. At each step of our algorithm, we perform one
vanilla CG step on our product space relaxation; then, we update
the objective
function via a penalty. We provide an analytical and geometric
exploration about the properties of this subproblem as its penalty
changes, as well as its relationships to \eqref{e:p} and related
optimization problems. In particular, we show that for any sequence
of penalty parameters which approach infinity, our subproblems
converge (in several ways) to the original problem.

Our method combines two classical tools from optimization: a
product-space reformulation and a penalty method. Penalty methods
with CG-based subroutines received some attention
several decades ago \cite{Chry97,Migd94}. Our algorithm is related
the Regularized Frank-Wolfe algorithm of \cite{Migd94}, however
their requirements do not apply in our setting. Our algorithm can
also be viewed as an analogue of
\cite{Yurt18}, with different parameters and analysis for 
nonconvex splitting problems \eqref{e:p}.

In the remainder of this section, we introduce notation,
background, and standing assumptions. In Section~\ref{sec:2}, we
demonstrate our product space approach and we establish analytical
results. In Section~\ref{sec:3}, we introduce our algorithm and
prove it converges.

\subsection{Notation, standing assumptions, and auxiliary results}
\label{sec:bg}

Let $\HH$ be a real Hilbert space with inner product
$\scal{\cdot}{\cdot}$, norm $\|\cdot\|=\sqrt{\scal{\cdot}{\cdot}}$,
and identity operator $\Id$. A closed ball
centered at $x\in\HH$ of radius $\varepsilon>0$ is denoted
$B(x;\varepsilon)$. Let $m\in\NN$, set $I=\{1,\ldots,m\}$, and
let $\{\omega_i\}_{i\in I}\subset\left]0,1\right]$ satisfy
$\sum_{i\in I}\omega_i=1$ (e.g., $\omega_i\equiv 1/m$).
\begin{equation}
\label{e:H}
\text{$\HHH=\HH^m$ is the real Hilbert space with inner product
$\scal{\cdot}{\cdot}_{\HHH}= \sum_{i\in
I}\omega_i\scal{\cdot}{\cdot}_{\HH}$,}
\end{equation}
which gives rise to the norm
$\|\cdot\|_{\HHH}=\sqrt{\scal{\cdot}{\cdot}_{\HHH}}$.
Points in $\HHH$ and their
subcomponents are denoted $\bx=(\bx^1,\bx^2,\ldots,\bx^m)$. We call
$\DD=\menge{\bx\in\HHH}{\bx^1=\bx^2=\ldots=\bx^m}$ the {\em diagonal
subspace} of $\HHH$. The block averaging operation and its adjoint
are 
\begin{equation}
\label{e:A}
A\colon\HHH\to\HH\colon \bx\mapsto\sum_{i\in I}\omega_i\bx^i\quad
\text{and}\quad
A^*\colon\HH\to\HHH\colon x\mapsto (x, \ldots, x).
\end{equation}
The {\em projection} operator onto a closed convex set
$C\subset\HH$ is denoted
$\proj_C\colon\HH\to\HH\colon x\mapsto\Argmin_{c\in C}\|x-c\|$.
The {\em distance} and {\em indicator} functions of the set $\DD$
are denoted $\dist_{\DD}\colon\HHH\to\mathbb{R}\colon\bx\mapsto\inf_{\bz\in
\DD}\|\bx-\bz\|$ and
\begin{equation}
\iota_{\DD}\colon\HHH
\to\left[0,+\infty\right]\colon\bx\mapsto\begin{cases}
0 &\text{if}\;\bx\in\DD\\
+\infty &\text{if}\;\bx\not\in\DD,
\end{cases}
\end{equation}
respectively. Note that
\cite[Cor.~12.31]{Livre1}
$\nabla\dist_{\DD}^2/2=
\Id-\proj_{\DD}$, and via \eqref{e:H},
\begin{equation}
\label{e:props}
\frac{1}{2}\dist_{\DD}^2(\bx)=\frac{1}{2}\sum_{i\in
I}\omega_i\|A\bx-\bx^i\|^2,\quad\text{so}\quad
A^*A=\Id-\nabla\left(\frac{1}{2}\dist_{\DD}^2\right)=\proj_{\DD}
\end{equation}
and $\|A\|\leq 1$ \cite[Sec.~4]{Livre1}.
Unless otherwise stated, let $(C_i)_{i\in I}$ be a collection of
nonempty compact convex subsets of $\HH$, let
$L_f>0$, and let $f\colon\HH\to\RR$ be a
Fr\'echet differentiable function which is $L_f$-{\em smooth},
\begin{equation}
\label{e:2}
(\forall (x,y)\in\HH^2)\quad f(y)-f(x)\leq\scal{\nabla f(x)}{y-x}+
\dfrac{L_f}{2}\|y-x\|^2
\end{equation}
and, when restricted to Section~\ref{sec:cvx}, also {\em convex},
\begin{equation}
\label{e:1}
(\forall (x,y)\in\HH^2)\quad \scal{\nabla f(x)}{y-x}\leq f(y)-f(x).
\end{equation}
Technically \eqref{e:2} assumes smoothness of $f$ on $\HH$,
although this work only requires \eqref{e:2} to hold on the
Minkowski sum $\sum_{i\in I}\omega_i C_i$. This assumption
excludes the use of functions only defined on the interior, e.g.,
some logarithmic barriers.

\begin{fact}
\label{f:gsmooth}
Since $\nabla \dist_{\DD}^2/2=\Id-\proj_{\DD}=\proj_{\DD^\perp}$ is
a projection operator onto a nonempty closed convex set, it is
$1$-Lipschitz continuous and therefore $\dist_{\DD}^2/2$ is
$1$-smooth \cite[Corollary~12.31, Section~4]{Livre1}.
\end{fact}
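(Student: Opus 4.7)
The plan is to verify in turn the three assertions of the statement: (i) the gradient identity for $\dist_{\DD}^2/2$, (ii) the $1$-Lipschitz continuity of this gradient, and (iii) the $1$-smoothness in the sense of \eqref{e:2}.

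For (i), I would simply point back to \eqref{e:props}, which already records $\nabla (\dist_{\DD}^2/2)=\Id-\proj_{\DD}=A^*A$. Because $\DD$ is by definition the diagonal, it is a closed linear subspace of $\HHH$, and so its orthogonal complement $\DD^\perp$ exists as a closed subspace with $\Id=\proj_{\DD}+\proj_{\DD^\perp}$. This yields $\Id-\proj_{\DD}=\proj_{\DD^\perp}$, and confirms the claim that the gradient of $\dist_{\DD}^2/2$ is itself a projection onto a nonempty closed convex set.

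For (ii), I would invoke the standard Hilbert-space result that projections onto nonempty closed convex sets are firmly nonexpansive; in particular, they are $1$-Lipschitz. Applied to $K=\DD^\perp$, this gives
\begin{equation*}
(\forall (\bx,\by)\in\HHH^2)\quad
\|\proj_{\DD^\perp}\bx-\proj_{\DD^\perp}\by\|_{\HHH}\leq \|\bx-\by\|_{\HHH}.
\end{equation*}
Combined with (i), this shows $\nabla(\dist_{\DD}^2/2)$ is $1$-Lipschitz on $\HHH$.

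For (iii), I would apply the descent lemma: if $g\colon\HHH\to\RR$ is Fr\'echet differentiable with $L$-Lipschitz gradient, then
\begin{equation*}
g(\by)-g(\bx)\leq \scal{\nabla g(\bx)}{\by-\bx}_{\HHH}+\frac{L}{2}\|\by-\bx\|_{\HHH}^2
\end{equation*}
for every $\bx,\by$, which is exactly the smoothness inequality \eqref{e:2}. Taking $g=\dist_{\DD}^2/2$ and $L=1$ closes the argument. There is no genuine obstacle here; the entire proof is a citation chain to standard Hilbert-space facts (all available in \cite{Livre1}), and the only content beyond \eqref{e:props} is the identification $\Id-\proj_{\DD}=\proj_{\DD^\perp}$, which is immediate from $\DD$ being a closed subspace.
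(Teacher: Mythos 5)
Your argument is correct and follows essentially the same route as the paper, which simply cites the standard Hilbert-space facts (projection identity for $\nabla\dist_{\DD}^2/2$, nonexpansiveness of projections, descent lemma) from \cite{Livre1}; your write-up just makes the citation chain explicit. One minor slip: \eqref{e:props} records $A^*A=\proj_{\DD}$, so $\nabla(\dist_{\DD}^2/2)=\Id-\proj_{\DD}=\Id-A^*A$, not $A^*A$ as you wrote — but this mislabeling does not enter steps (ii) or (iii), so the proof stands.
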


For every $i\in I$ and every $x\in\HH$, the operation $\lmo_i$
returns a point in $\Argmin_{z\in C_i}\scal{x}{z}$. 
The {\em Frank-Wolfe gap} (\FW gap) of $f$ over a compact convex
set $C\subset\HH$ at $x\in\HH$ is 
\begin{equation}
G_{f,C}(x):=\sup_{v\in
C}\scal{\nabla f(x)}{x-v}=\scal{\nabla f(x)}{x-\lmo_{C}(\nabla
f(x))}.
\end{equation}
 Note that, for every $x\in\HH$ \cite{CGsurvey},
\begin{equation}
\label{e:opt}
x\text{ is a stationary point of
}\;\;\underset{x\in C}{\textrm{minimize}\,\, f(x)}
\quad\Leftrightarrow\quad
\begin{cases}
x\in C\\
G_{f,C}(x)\leq 0.
\end{cases}
\end{equation}
Note that if $x\in C$, we always have $G_{f,C}(x)\geq 0$.

\begin{lemma}
\label{l:reg}
Let $f$ and $h$ be real-valued functions on a nonempty set
$C\subset\HH$, let $\lambda,\Delta>0$, and suppose that
\begin{equation*}
x\in\underset{x\in C}{\Argmin}f(x)+\lambda h(x)\quad
\text{and}\quad
z\in\underset{z\in C}{\Argmin}f(z)+(\lambda+\Delta)h(z).
\end{equation*}
Then
$f(x)\leq f(z)$
and $h(z)\leq h(x)$.
\end{lemma}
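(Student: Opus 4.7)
The plan is the classical two-line argument that underlies monotonicity in penalty/regularization methods: just exploit the two optimality inequalities and add them to cancel $f$.

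Concretely, I would first write down the two defining inequalities
\begin{equation*}
f(x)+\lambda h(x) \leq f(z)+\lambda h(z)
\qquad\text{and}\qquad
f(z)+(\lambda+\Delta) h(z)\leq f(x)+(\lambda+\Delta) h(x),
\end{equation*}
coming from the fact that $x$ minimizes $f+\lambda h$ over $C$ (and $z\in C$), and $z$ minimizes $f+(\lambda+\Delta)h$ over $C$ (and $x\in C$). Adding these two inequalities, the $f$-terms cancel, leaving $\lambda h(x)+(\lambda+\Delta)h(z)\leq \lambda h(z)+(\lambda+\Delta) h(x)$, which simplifies to $\Delta h(z)\leq \Delta h(x)$; since $\Delta>0$, this yields the second conclusion $h(z)\leq h(x)$.

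For the first conclusion, I would plug the just-proved inequality $h(x)\geq h(z)$ back into the first optimality inequality: $f(x)+\lambda h(x)\leq f(z)+\lambda h(z)\leq f(z)+\lambda h(x)$, and cancel $\lambda h(x)$ (finite because $h$ is real-valued on $C$ and $x,z\in C$) to obtain $f(x)\leq f(z)$.

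There is no real obstacle here; the only things worth being careful about are that $x$ and $z$ lie in $C$ so that each can serve as a feasible competitor for the other's problem, and that $h(x)$ is finite so the cancellation in the last step is legitimate — both of which are immediate from the hypotheses.
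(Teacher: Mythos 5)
Your proof is correct and follows essentially the same route as the paper: both start from the same two optimality inequalities and extract $h(z)\leq h(x)$ and then $f(x)\leq f(z)$ by elementary algebraic manipulation (the paper rearranges them into the sandwich $(\lambda+\Delta)(h(z)-h(x))\leq f(x)-f(z)\leq\lambda(h(z)-h(x))$, while you add them and then substitute back — a cosmetic difference only).
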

\begin{proof}
Since $x$ and $z$ are optimal solutions, we have
$f(x)+\lambda h(x)\leq f(z)+\lambda h(z)$
and
$f(z)+(\lambda+\Delta)h(z)\leq f(x)+(\lambda+\Delta) h(x)$,
so in particular,
\begin{equation}
\label{e:g1}
(\lambda+\Delta)(h(z)-h(x))\leq f(x)-f(z)\leq\lambda(h(z)-h(x)).
\end{equation}
Subtracting $\lambda(h(z)-h(x))$ from \eqref{e:g1} implies that
$h(z)-h(x)\leq 0$ which, in view of \eqref{e:g1}, yields
$f(x)-f(z)\leq 0$.
\end{proof}

We assume the ability to compute $\nabla f$, $(\lmo_i)_{i\in I}$,
and basic linear algebra operations, e.g., those in \eqref{e:A}.
Let $f\colon\HH\to\left]-\infty,+\infty\right]$. The {\em
subdifferential} of $f$ at $x\in\HH$ is given by $\partial
f(x)=\menge{u\in\HH}{(\forall y\in\HH)\quad
f(x)+\scal{u}{y-x}\leq f(y)}$. The {\em epigraph} of
$f$ is $\epi f=\menge{(x,\eta)\in\HH\times\RR}{f(x)\leq \eta}$. The
{\em graph} of an operator $M\colon\HH\to2^\HH$ is $\gr
M=\menge{(x,u)\in\HH^2}{u\in M(x)}$. Some of our analytical results
rely on the theory of convergence of sets and set-valued operators;
for a broad review, see \cite{Rock09}.

\begin{definition}
\label{def:conv}
Let $(C_n)_{n\in\NN}$ be a sequence of subsets of $\RR^n$, and let
$(f_n)_{n\in\NN}$ be functions on $\RR^n$. The {\em
outer limit} and {\em inner limit} of $(C_n)_{n\in\NN}$ are
\begin{equation}
\begin{aligned}
\lim\sup_{n\in\NN}(C_n)_{n\in\NN}&=
\menge{x\in\RR^n}{\lim\sup_{n\to+\infty}\dist_{C_n}(x)=0}\\
\text{and}\quad\lim\inf_{n\in\NN}(C_n)_{n\in\NN}&=
\menge{x\in\RR^n}{\lim\inf_{n\to+\infty}\dist_{C_n}(x)=0},
\end{aligned}
\end{equation}
respectively \cite[Ex.~4.2]{Rock09}.
If both limits exist and coincide, this set is the {\em limit} of
$(C_n)_{n\in\NN}$. The sequence $(f_n)_{n\in\NN}$ {\em converges
epigraphically} to a function $f$ on $\mathbb{R}^n$ if the sequence
of epigraphs $(\epi f_n)_{n\in\NN}$ converge to $\epi f$. The
sequence $(\partial f_n)_{n\in\NN}$ {\em converges graphically} to
$\partial f$ if $(\gr\partial f_n)_{n\in\NN}$ converges to
$\gr\partial f$.
\end{definition}

\section{Splitting constraints with a product space}
\label{sec:2}
This section outlines our algorithm and provides additional
analysis relating our approach to similar problems in optimization.

\subsection{Algorithm design}
\label{sec:algo}

The vanilla conditional gradient algorithm solves
\begin{equation}
\label{e:OG}
\underset{x\in C}{\text{minimize}}\;\;f(x)
\end{equation}
using $\lmo_C$ and gradients of $f$. However, 
one of the central hurdles in designing a tractable CG-based
splitting algorithm is finding a way to enforce membership in the
constraint $\CCN$ without access to its projection or LMO.
Our approach to solving this issue comes from the following 
construction on the product space $\HHH$ (see Section~\ref{sec:bg}
for notation and Fig.~\ref{fig:prod} for visualization).
\begin{proposition}
\label{prop:decomp}
Let $(C_i)_{i\in I}$ be a collection of nonempty 
subsets of $\HH$, and let $\DD\subset\HHH$ denote the diagonal
subspace. Then
\begin{align}
(\forall x\in\HH)\quad 
(x,\ldots,x)\in
\DD\cap\bigtimes_{i\in I}C_i
\quad&\Leftrightarrow\quad x\in\bigcap_{i\in I}C_i \\
(\forall \bx\in\HHH)\quad\quad\bx\in\DD\cap\bigtimes_{i\in I}C_i
\quad&\Leftrightarrow\quad
(\exists\, x\in\HH)\quad
\begin{cases}
\bx=(x,\ldots,x)\\
x\in\bigcap_{i\in I}C_i,
\end{cases}
\end{align}
\end{proposition}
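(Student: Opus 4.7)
The proposition is essentially a definitional unfolding, so the plan is direct rather than clever. The key observation is that membership in $\DD$ exactly encodes the constraint that all block coordinates agree, while membership in $\bigtimes_{i\in I} C_i$ exactly encodes that each block coordinate $\bx^i$ lies in the corresponding set $C_i$. Conjoining the two constraints therefore collapses the product-space condition back to a single-point condition over the intersection $\bigcap_{i\in I} C_i$.

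For the first equivalence, I would start from $x\in\HH$ and observe that $(x,\ldots,x)\in\DD$ holds trivially by the definition of the diagonal subspace. Hence $(x,\ldots,x)\in\DD\cap\bigtimes_{i\in I} C_i$ reduces to $(x,\ldots,x)\in\bigtimes_{i\in I} C_i$, which by the definition of the Cartesian product is equivalent to $x\in C_i$ for every $i\in I$, i.e.\ $x\in\bigcap_{i\in I} C_i$. Both directions of the biconditional follow from this chain of equivalences read left-to-right or right-to-left.

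For the second equivalence, I would prove the forward direction first. If $\bx\in\DD\cap\bigtimes_{i\in I} C_i$, then $\bx\in\DD$ means $\bx^1=\bx^2=\cdots=\bx^m$; calling this common value $x\in\HH$ gives $\bx=(x,\ldots,x)$, and then $\bx\in\bigtimes_{i\in I}C_i$ says $x=\bx^i\in C_i$ for every $i$, so $x\in\bigcap_{i\in I} C_i$. For the reverse direction, given $x\in\HH$ with $\bx=(x,\ldots,x)$ and $x\in\bigcap_{i\in I}C_i$, one has $\bx\in\DD$ by construction and $\bx\in\bigtimes_{i\in I} C_i$ because $\bx^i=x\in C_i$ for every $i$; alternatively this half is immediate by invoking the first equivalence.

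There is no genuine obstacle here; the only thing to be mildly careful about is writing the two biconditionals cleanly without tautologically citing one to prove the other. I would keep the two items logically independent by giving each a two-line direct proof from the definitions of $\DD$ and $\bigtimes_{i\in I} C_i$, rather than deriving (2) from (1), so that the proposition reads as a pure unpacking of notation introduced in Section~\ref{sec:bg}.
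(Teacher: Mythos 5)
Your proof is correct and matches the paper's approach: the paper simply declares the result ``Clear from construction,'' and your definitional unpacking of $\DD$ and $\bigtimes_{i\in I}C_i$ is exactly what that remark compresses.
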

\begin{proof}
Clear from construction.\footnote{The type of construction in
Proposition~\ref{prop:decomp} goes back to the work of Pierra
\cite{Pier84}.}
\end{proof}

\begin{figure}[t]
\centering
\includegraphics[width=9.8cm]{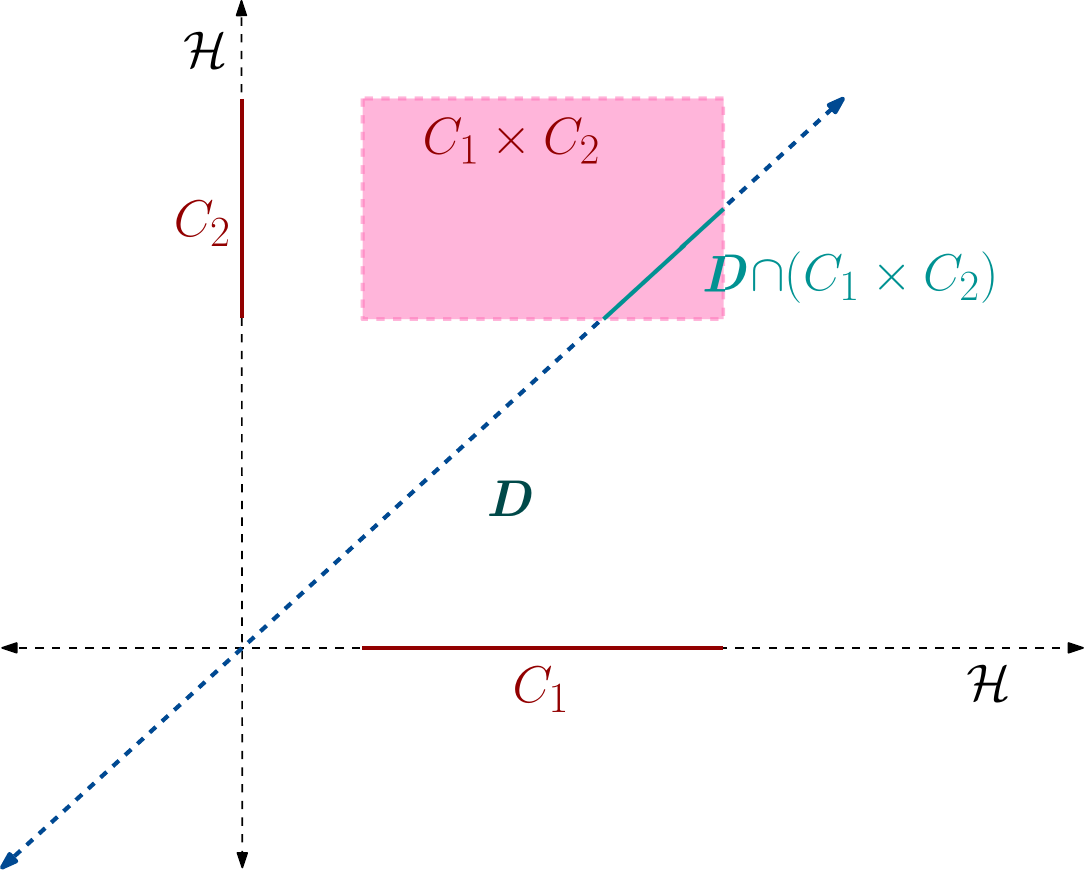}
\caption{
Visualization of the product space for $\HH=\RR$ and $m=2$. Our algorithm
produces iterates $\bx_t$ which are always inside the shaded
constraint set, and their averages $A^*A\bx_t$ are always on the
diagonal subspace $\DD$. The solid segment where $C_1\times C_2$
and $\DD$ intersect corresponds precisely to our split feasibility
constraint via Proposition~\ref{prop:decomp}.
}
\label{fig:prod}
\end{figure}

Proposition~\ref{prop:decomp} provides a decomposition of the
split feasibility constraint $\CCN$ in
terms of two simpler sets $\DD$ and $\bigtimes_{i\in I}C_i$. This
yields a product space reformulation of \eqref{e:p}
\begin{equation}
\label{e:prod}
\underset{\bx\in\CCX}{\text{minimize}}\;\;f(A\bx)+\iota_{\DD}(\bx).
\end{equation} 
The constraints $\DD$ and $\bigtimes_{i\in I}C_i$ are simpler in
the sense
that, even in our restricted computational setting, we can compute
operators to enforce them. In particular, the projection onto
$\DD$ is computed by simply repeating the average of all components
in every component
$\proj_{\DD}\bx=A^*(\sum_{i\in I}\omega_i \bx^i)$. Critically, this
operation is cheap, so one can actually evaluate the gradient 
$\nabla\dist_{\DD}^2/2=\Id-\proj_{D}$ even though it involves a
projection. The constraint $\bigtimes_{i\in I}C_i$ is readily
processed using the following property.

\begin{fact}
\label{f:22}
Let $(C_i)_{i\in I}$ be a collection of nonempty compact convex
subsets of $\HH$. Then
\begin{equation}
(\forall\bx\in\HHH)\quad
\lmo_{(\bigtimes_{i\in
I}C_i)}(\bx)=(\lmo_{C_1}\bx^1,\ldots,\lmo_{C_m}\bx^m).
\end{equation}
\end{fact}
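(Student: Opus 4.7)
The plan is to unfold the definition of $\lmo$ on the product space using the weighted inner product $\scal{\cdot}{\cdot}_{\HHH}$ from \eqref{e:H} and exploit the fact that the resulting objective is additively separable across the blocks. Concretely, for fixed $\bx\in\HHH$ and any $\bz\in\bigtimes_{i\in I}C_i$, I would write
\begin{equation*}
\scal{\bx}{\bz}_{\HHH}
=\sum_{i\in I}\omega_i\scal{\bx^i}{\bz^i}_{\HH},
\end{equation*}
and observe that the feasible set $\bigtimes_{i\in I}C_i$ places no coupling constraints between the blocks $\bz^i$.

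From there, the minimization over $\bz\in\bigtimes_{i\in I}C_i$ decouples into $m$ independent minimizations, one per block. Since each $\omega_i>0$ by the standing assumption $\{\omega_i\}_{i\in I}\subset\left]0,1\right]$, the $i$-th subproblem $\min_{\bz^i\in C_i}\omega_i\scal{\bx^i}{\bz^i}_{\HH}$ shares its argmin set with $\min_{\bz^i\in C_i}\scal{\bx^i}{\bz^i}_{\HH}$, which by definition contains $\lmo_{C_i}(\bx^i)$. Assembling the $m$ block minimizers into a tuple yields a point in the argmin on the product space and therefore a valid output of $\lmo_{(\bigtimes_{i\in I}C_i)}(\bx)$, which is exactly the claimed identity.

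There is essentially no obstacle here — compactness of each $C_i$ guarantees the individual argmins are nonempty, and separability of the weighted inner product does all the work. The only minor subtlety worth flagging is that $\lmo$ is multi-valued in general, so the statement should be read as an equality between one admissible selection on the product side and the tuple of admissible selections on the block side; this is already the convention adopted earlier in the paper when defining $\lmo_i$ as returning \emph{a} point of the argmin.
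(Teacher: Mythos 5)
Your argument is correct and is exactly the separability reasoning the paper implicitly relies on (the fact is stated without proof as standard): the weighted inner product on $\HHH$ decouples over blocks, positive weights $\omega_i$ preserve each blockwise argmin, and compactness gives nonemptiness. Your remark about reading the identity as a selection of the (possibly multi-valued) argmins is the right convention and consistent with the paper's definition of $\lmo_i$.
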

In particular, to evaluate an LMO for the product $\bigtimes_{i\in
I}C_i$, it suffices to evaluate the individual operators  
$(\lmo_i)_{i\in I}$ once.

With these ideas in mind, let us introduce the penalized function
\begin{equation}
\label{e:4}
F_{\lambda}\colon\HHH\to\mathbb{R}\colon\bx\mapsto f(A\bx)+\lambda
\frac{1}{2}\dist_{\DD}^2(\bx),
\end{equation}
which, for every $\lambda\geq 0$, is $(L_f+\lambda)$-smooth (cf.
Fact~\ref{f:gsmooth}).
We observe that for every penalty parameter $\lambda_t\geq 0$, even
under our
restricted computational setting, the following relaxation of
\eqref{e:prod} is still tractable with the vanilla CG algorithm
\begin{equation}
\label{e:49}
\underset{\bx\in\CCX}{\text{minimize}}\;\;F_{\lambda_t}(\bx).
\end{equation}
Indeed, vanilla CG requires the ability to compute the
gradient of the objective function and the LMO of the constraint.
Computing $\nabla F_{\lambda}=\nabla f+\lambda(\Id-\proj_{\DD})$
amounts to one evaluation of $\nabla f$, computing one average, and
some algebraic manipulations. By promoting membership of
$\DD$ via the objective function, 
we are left with the LMO-amenable constraint $\CCX$. 

The core idea of our algorithm is, at each iteration $t\in\NN$, to
perform
one Frank-Wolfe step to the relaxed subproblem \eqref{e:49}.
Then, between iterations, we update the objective function
in \eqref{e:49} via $\lambda_t$ to promote feasibility. Although
\eqref{e:49} is a relaxation of the intractable problem
\eqref{e:prod}, taking $\lambda_t\to\infty$ suffices to show
convergence in \FW gap (and primal gap, in the convex case) to
solutions of \eqref{e:prod} and hence \eqref{e:p}; this is
substantiated in Sections~\ref{sec:interp} and \ref{sec:3}. 
For every $\bx\in\HHH$, the $i$th component of the
gradient is given by $\nabla F_\lambda(\bx)^i=\nabla
f(A\bx)+\lambda(\bx^i-A\bx)$. So, a CG step applied to
\eqref{e:49} yields Algorithm~\ref{alg:scg}. While
Section~\ref{sec:3} contains the precise schedules for
Lines~\ref{a:lam} and \ref{a:step}, the parameters behave like
$(\lambda_t,\gamma_t)=(\mathcal{O}(\ln t),\mathcal{O}(
1/\sqrt{t}))$.

\begin{algorithm}[ht]
\caption{Split conditional gradient (SCG) algorithm}
\label{alg:scg}  
\begin{algorithmic}[1]
\REQUIRE Smooth function $f$, weights $\{\omega_i\}_{i\in
I}\subset\left]0,1\right]$ such that $\sum_{i\in I}\omega_i=1$,
point $\bx_0\in\CCX$ 
\STATE $x_0\leftarrow \sum_{i\in I}\omega_i\bx_0^i$
\FOR{$t=0, 1$ \textbf{to} $\dotsc$}
\STATE Choose penalty parameter $\lambda_t\in
\left]0,+\infty\right[$
\label{a:lam}
\STATE Choose step size $\gamma_t\in \left]0,1\right]$
\label{a:step}
\STATE $g_t\leftarrow \nabla f(x_t)$
\COMMENT{Store $\nabla f(A\bx_t)$ for CG step on \eqref{e:49}}
\FOR{$i=1$ \textbf{to} $m$}
\label{a:block}
\STATE $\bv_{t}^i\leftarrow \lmo_i(g_t+\lambda_t(\bx_{t}^i-x_t))$
\COMMENT{LMO applied to $\nabla F_{\lambda_t}(\bx_t)^i$}
\label{a:lmo}\\
\STATE $\bx_{t+1}^i\leftarrow
\bx_{t}^i+\gamma_t(\bv_{t}^i-\bx_{t}^i)$
\COMMENT{CG step in $i$th component}
\label{a:new}\\
\ENDFOR
\label{a:endblock}
\STATE $x_{t+1}\leftarrow \sum_{i\in I}\omega_i\bx_{t+1}^i$
\COMMENT{Approximate solution by averaging}
\label{a:avg}
\ENDFOR
\end{algorithmic}
\end{algorithm}

CG-based algorithms possess the advantage that, at every iteration,
the iterates are feasible (i.e., for \eqref{e:OG}, $x_t\in C$).
Our approach inherits this familiar property; however, since we
solve a product space relaxation, $\bx_t\in\CCX$ and hence, for
every $i\in I$, the $i$th component of our sequence is feasible for
the $i$th constraint, i.e., $(\bx_{t}^i)_{t\in\NN}\in C_i$.
Importantly, this does not guarantee that any subcomponent
$\bx_{t}^{i}$ resides in $\CCN$, so they are not feasible for the
splitting problem \eqref{e:p}; feasibility in $\CCN$ is
acquired ``in the limit'', by showing that $\bx_t\in\CCX$ and
$\dist_{\DD}(\bx_t)\to 0$ (proven in Section~\ref{sec:3}). 

In practice, one needs a route to construct an approximate
solution to \eqref{e:p} in $\HH$ from an iterate of
Algorithm~\ref{alg:scg} in the product space $\HHH$. Instead of
taking a component, our approximation is the average computed in
Line~\ref{a:avg}, since
\begin{equation}
\label{e:p12}
(\forall \bx\in\HHH)\quad\quad\bx\in\DD\cap\bigtimes_{i\in I}C_i
\quad\Rightarrow\quad
A\bx\in\bigcap_{i\in I}C_i
\end{equation}
is a strict implication.
Hence the condition $A\bx\in\CCN$ is easier to satisfy than
$\bx\in\DD\cap\CCX$ (see also Sec.~\ref{sec:Frel}).

\begin{remark}
\label{r:OG}
If we have only $m=1$ set constraint, then $A=\Id$, and
$\HH=\HHH=\DD$, so at every iteration $t\in\NN$,
$F_{\lambda_t}=f(x)$. Therefore, the classical CG algorithm is a
special case of Algorithm~\ref{alg:scg}.
\end{remark}

\begin{remark}
\label{r:wi}
The convex weights $(\omega_i)_{i\in I}$ in
\eqref{e:H}--\eqref{e:A} can be used to preferentially
promote membership of the approximate solution $A\bx_t$ into some
constraint(s) over others. If all constraints are equally
important, we suggest $\omega_i\equiv 1/m$; if one constraint $C_j$
is more important, then by
selecting $\omega_j>\omega_i$ for all $i\in I\setminus\{j\}$, 
the weighted average $A\bx_t$ is closer (in a Euclidean sense) to
$\bx_t^j\in C_j$ than the components of $\bx_t$ in other sets
$(C_i)_{i\in I\setminus \{j\}}$.
\end{remark}

\begin{fact}
All gradients are computed with respect to the inner product on
$\HHH$. If a gradient with respect
to the Euclidean inner product
$\scal{\cdot}{\cdot}_{\mathcal{E}}\coloneqq\sum_{i\in
I}\scal{\cdot}{\cdot}_{\HH_i}$ is provided, then one may construct the
gradient with respect to the Hilbertian inner product \eqref{e:H}
on $\HHH$ by multiplying the $i$th component by $\omega_i$ for all
$i\in I$.
\end{fact}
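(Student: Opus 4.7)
The plan is to argue from the defining property of the gradient via Riesz representation. The Fr\'echet derivative $DF(\bx)\colon\HHH\to\RR$ is a bounded linear functional that depends only on $F$ and $\bx$, not on the inner product placed on $\HHH$. Each inner product merely selects a distinct Riesz representer of this \emph{same} linear functional, and the componentwise rescaling claimed in the fact is the algebraic consequence of comparing the two representers.

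Concretely, I would first write both Riesz identifications simultaneously: for every $\bp\in\HHH$,
\begin{equation*}
\sum_{i\in I}\scal{(\nabla_{\mathcal{E}}F(\bx))^i}{\bp^i}_{\HH}
=DF(\bx)(\bp)
=\sum_{i\in I}\omega_i\scal{(\nabla_{\HHH}F(\bx))^i}{\bp^i}_{\HH},
\end{equation*}
where the left equality uses the definition of $\scal{\cdot}{\cdot}_{\mathcal{E}}$ and the right equality uses \eqref{e:H}. Next I would decouple the components by testing this identity with $\bp=(0,\dots,0,v,0,\dots,0)$ where $v\in\HH$ is placed in the $j$th slot, for each $j\in I$. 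Nondegeneracy of $\scal{\cdot}{\cdot}_{\HH}$ and arbitrariness of $v$ then yield the componentwise identity
\begin{equation*}
(\nabla_{\mathcal{E}}F(\bx))^j=\omega_j\,(\nabla_{\HHH}F(\bx))^j\qquad(\forall j\in I),
\end{equation*}
from which the claimed componentwise rescaling between the Euclidean and Hilbertian gradients follows by solving for $(\nabla_{\HHH}F(\bx))^j$.

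There is no real obstacle here: the proof is essentially pure bookkeeping. The only point to keep straight is that the weights $\omega_i$ sit inside the Hilbertian inner product but not inside the Euclidean one, which forces the direction and magnitude of the componentwise scaling between the two representers. The argument extends transparently to any other reweighting of the inner product on $\HHH$, and in particular confirms that \emph{no} conversion is needed whenever $\omega_i\equiv 1$.
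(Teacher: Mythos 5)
Your Riesz-representation bookkeeping is the natural (indeed essentially the only) way to verify this Fact, which the paper states without proof, and the identity you derive, $(\nabla_{\mathcal{E}}F(\bx))^j=\omega_j\,(\nabla_{\HHH}F(\bx))^j$ for every $j\in I$, is correct. The problem is your final step: solving this identity for the Hilbertian gradient gives $(\nabla_{\HHH}F(\bx))^j=\omega_j^{-1}(\nabla_{\mathcal{E}}F(\bx))^j$, i.e.\ the provided Euclidean gradient's $j$th component must be \emph{divided} by $\omega_j$ (equivalently multiplied by $1/\omega_j\geq 1$), whereas the statement as printed says to multiply by $\omega_i$. So your algebra does not deliver ``the claimed componentwise rescaling''; it delivers its reciprocal, and in fact shows that multiplication by $\omega_i$ is the conversion in the opposite direction (Hilbertian to Euclidean). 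A concrete check: take $\HH=\RR$, $m=2$, $\omega_1=\omega_2=1/2$, $f(x)=x$, and $F(\bx)=f(A\bx)=\tfrac12\bx^1+\tfrac12\bx^2$; the Euclidean gradient is $(1/2,1/2)$, while the paper's own formula $\nabla F_{\lambda}(\bx)^i=\nabla f(A\bx)+\lambda(\bx^i-A\bx)$ with $\lambda=0$ gives the Hilbertian gradient $(1,1)$ -- obtained by dividing, not multiplying, by $\omega_i$. You should therefore either state your conclusion in the corrected form or explicitly flag that the Fact's phrasing has the factor inverted; as written, your last sentence asserts agreement with a claim that your own (correct) derivation contradicts.

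Two smaller points: you should justify the harmless interchange you make implicitly, namely that the Fr\'echet derivative $DF(\bx)$ is the same bounded linear functional under both inner products (this is immediate here because the two norms are equivalent, with constants depending on $\min_i\omega_i\leq 1$, but it is worth one sentence). Also, your closing remark that no conversion is needed when $\omega_i\equiv 1$ is vacuous in this paper's setting, since the standing normalization $\sum_{i\in I}\omega_i=1$ forces $\omega_i\equiv 1$ only when $m=1$.
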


\subsection{Analysis}
Here we gather analytical results pertaining to our algorithm, 
the geometry of our product-space construction, and how our relaxed
problem relates to other classical problems in optimization. While
these results are interesting in their own right, many are also
used to show convergence in Section~\ref{sec:3}.

\subsubsection{Geometry (and tractability) of penalty functions on
the Cartesian product}
\label{sec:Frel}

As seen in Section~\ref{sec:algo}, Algorithm~\ref{alg:scg}
promotes split feasibility by, at every iteration $t\in\NN$,
requiring that $\bx_t\in\CCX$ and penalizing the distance from
$\bx_t$ to $\DD$. However, as seen in \eqref{e:p12},
$\dist_{\DD}(\bx_t)=0$ is a sufficient (but not
necessary) condition to acquire a feasible average $A\bx_t\in\CCN$;
see Fig.~\ref{fig:prod3}.
In this section, we present a penalty function which precisely
characterizes this condition. Via a simple geometric argument based
on the projection theorem, we guarantee that although
utilizing this penalty is not computationally tractable, it is
nonetheless minimized when $\dist_{\DD}$ vanishes. These results
also further substantiate the claim that $\bx\in\DD\cap\CCX$ is a
stricter condition than $A\bx\in\CCN$, which is our motivation to 
use the average in Line~\ref{a:avg} of Algorithm~\ref{alg:scg} as
our approximate solution to \eqref{e:p}.

\begin{proposition}
\label{p:2}
Let $(C_i)_{i\in I}$ be a collection of nonempty closed convex
subsets of $\HH$, let $\DD\subset\HHH$ denote the diagonal
subspace, and set
\begin{equation}
\label{e:d}
d\colon\HHH\to\left]-\infty,+\infty\right]\colon
\bx\mapsto\sum_{i\in I}\omega_i\dist^2_{C_i}(A\bx).
\end{equation}
Then, for every $\bx\in\HHH$, the following are equivalent.
\begin{enumerate}
\item
\label{p:2a}
$d(\bx)=0$.
\item
\label{p:2b}
$A\bx\in\CCN$.
\item
\label{p:2c}
$\proj_{\DD}(\bx)\in\CCX$.
\end{enumerate}
\end{proposition}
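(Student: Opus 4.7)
The plan is to prove a cycle (i) $\Rightarrow$ (ii) $\Rightarrow$ (iii) $\Rightarrow$ (i), though the three equivalences are really two independent, very short arguments: one uses only that the weights $\omega_i$ are strictly positive and that each $C_i$ is closed, and the other uses only the formula $\proj_{\DD}=A^*A$ recorded in \eqref{e:props}.

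For (i) $\Leftrightarrow$ (ii), I would first note that by definition $d(\bx)$ is a sum of nonnegative terms, each weighted by a strictly positive $\omega_i$. Hence $d(\bx)=0$ if and only if $\dist^2_{C_i}(A\bx)=0$ for every $i\in I$. Since each $C_i$ is nonempty closed convex, $\dist_{C_i}(A\bx)=0$ is equivalent to $A\bx\in C_i$. Intersecting over $i\in I$ yields $A\bx\in\CCN$.

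For (ii) $\Leftrightarrow$ (iii), the key identity is \eqref{e:props}, which gives $\proj_{\DD}=A^*A$. Combined with the definition of $A^*$ from \eqref{e:A}, this means
\begin{equation*}
\proj_{\DD}(\bx)=A^*(A\bx)=(A\bx,\ldots,A\bx)\in\HHH.
\end{equation*}
Therefore $\proj_{\DD}(\bx)\in\CCX$ if and only if $A\bx\in C_i$ for every $i\in I$, i.e., $A\bx\in\CCN$. This closes the loop.

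There is no real obstacle here; the proposition is essentially a bookkeeping consequence of \eqref{e:props} together with the elementary fact that a convex combination with strictly positive weights vanishes if and only if every summand vanishes. The only subtlety worth flagging in the writeup is that closedness of the $C_i$ (weaker than the ambient compactness hypothesis stated in the proposition) is what makes $\dist_{C_i}(A\bx)=0 \Leftrightarrow A\bx\in C_i$ rigorous.
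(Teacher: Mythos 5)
Your proof is correct, but it closes the equivalence differently from the paper, and the comparison is worth noting. The paper proves the cycle \ref{p:2a}$\Rightarrow$\ref{p:2b}$\Rightarrow$\ref{p:2c}$\Rightarrow$\ref{p:2a}, and its final step \ref{p:2c}$\Rightarrow$\ref{p:2a} is the substantive one: it computes $\proj_{\CCX}(\proj_{\DD}\bx)$ as a separable problem with solution $(\proj_{C_1}(A\bx),\ldots,\proj_{C_m}(A\bx))$ and thereby derives the identity $d(\bx)=\|\proj_{\DD}\bx-\proj_{\CCX}(\proj_{\DD}\bx)\|_{\HHH}^2$ (equation \eqref{e:24}), from which \ref{p:2a} follows immediately when $\proj_{\DD}\bx\in\CCX$. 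Your route replaces this with the observation that $\proj_{\DD}\bx=A^*A\bx=(A\bx,\ldots,A\bx)$, so membership of $\proj_{\DD}\bx$ in the product is literally componentwise membership of $A\bx$ in each $C_i$; this gives \ref{p:2b}$\Leftrightarrow$\ref{p:2c} in one line and makes the whole proposition a pair of elementary equivalences. Your argument is shorter and, as you note, does not even use convexity of the $C_i$. What the paper's longer route buys is the identity \eqref{e:24} itself, which is reused later (it underlies Corollary~\ref{c:dg} and the geometric picture in Fig.~\ref{fig:prod2}), so the extra work is not wasted in context. One small correction to your closing remark: the proposition as stated already assumes only closedness (not compactness) of the $C_i$, so there is no weakening of the stated hypothesis happening --- closedness is simply what makes $\dist_{C_i}(A\bx)=0\Leftrightarrow A\bx\in C_i$ valid, exactly as you use it.
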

\begin{proof}
\ref{p:2a}$\Rightarrow$\ref{p:2b}: For every $i\in I$,
$0\leq \omega_i\dist^2_{C_i}(A\bx)\leq d(\bx)=0$. Since
$\omega_i>0$, it follows that $\dist^2_{C_i}(A\bx)=0$ and hence $A\bx\in
C_i$.

\ref{p:2b}$\Rightarrow$\ref{p:2c}: By applying $A^*$ to the
inclusion \ref{p:2b},
\eqref{e:props} implies that
\begin{equation}
\proj_{\DD}\bx=A^*A\bx
\in A^*\CCN
=\Menge{(x,\ldots,x)\in\HHH}{x\in\CCN}.
\end{equation}
So, by Proposition~\ref{prop:decomp}, 
$\proj_{\DD}\bx\in\Menge{\bx\in\HHH}{\bx\in\DD\cap\CCX}
\subset\CCX$.

\ref{p:2c}$\Rightarrow$\ref{p:2a}: 
We begin by observing that 
\begin{equation}
\proj_{\CCX}(\proj_{\DD}\bx)=\underset{\bc\in\CCX}{\Argmin}\|\bc-A^*A\bx\|_{\HHH}^2
=\underset{\bc\in\CCX}{\Argmin}\sum_{i\in
I}\omega_i\|\bc^i-A\bx\|_{\HH}^2
\end{equation}
is a separable problem whose solution is
$\left(\proj_{C_1}(A\bx),\ldots,\proj_{C_m}(A\bx)\right)$.
Therefore, 
\begin{align}
\label{e:24}
d(\bx)
=\sum_{i\in I}\omega_i\|A\bx-\proj_{C_i}(A\bx)\|_{\HH}^2
=\|\proj_{\DD}\bx-\proj_{\CCX}(\proj_{\DD}\bx)\|_{\HHH}^2.
\end{align}
Since $\proj_{\DD}(\bx)=\proj_{\CCX}(\proj_{\DD}\bx)$, we conclude
$d(\bx)=0$.
\end{proof}

\begin{figure}[H]
\centering
\includegraphics[width=8.8cm,trim={5cm 8.2cm 2.5cm
1.2cm},clip]{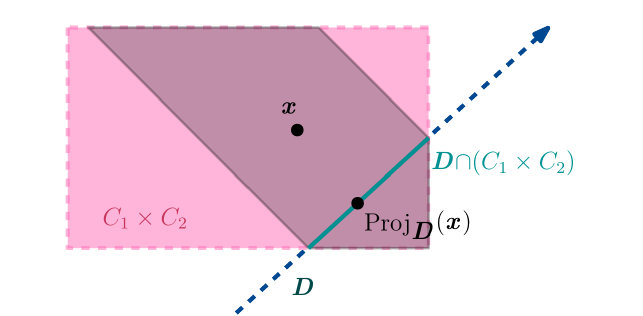}
\caption{
Zoomed view of Fig.~\ref{fig:prod}.
The darker shaded area is the collection of points $\bx\in\CCX$
for which $\proj_{\DD}(\bx)$ remains in $\CCX$. By
Proposition~\ref{p:2}, this is the set of points satisfying
$A\bx\in\CCN$. This exemplifies that the implication \eqref{e:p12}
is strict.
}
\label{fig:prod3}
\end{figure}

Since we do not assume the ability to project onto the sets
$(C_i)_{i\in I}$, evaluating $\nabla d=2\sum_{i\in
I}\omega_i(\Id-\proj_{C_i})$ is not possible. Therefore, replacing
$F_{\lambda}$ in \eqref{e:49} with the composite function
$f(A\bx)+\lambda d(\bx)$ is not tractable with a vanilla CG-based
approach. However, $d$ is closely related to our penalty
function $\dist^2_{\DD}$ via the following result.

\begin{corollary}
\label{c:dg}
In the setting of Proposition~\ref{p:2}, let $\bx\in\bigtimes_{i\in
I}C_i$, set $\by=\proj_{\DD}\bx$ and set
$\bp=\proj_{(\bigtimes_{i\in I} C_i)}(\by)$. Then
\begin{equation}
\label{e:dist-g}
d(\bx)
=\dist_{\DD}^2(\bx)-\|\bx-\bp\|^2 +2
\underbrace{\scal{\bx-\bp}{\by-
\bp}}_{\leq 0}.
\end{equation}
In consequence, $0\leq d(\bx)\leq
\dist_{\DD}^2(\bx)$.
\end{corollary}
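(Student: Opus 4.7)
The plan is to bootstrap directly off the computation already carried out inside the proof of Proposition~\ref{p:2}. There, equation~\eqref{e:24} established that
\begin{equation*}
d(\bx)=\|\proj_{\DD}\bx-\proj_{\CCX}(\proj_{\DD}\bx)\|_{\HHH}^2
=\|\by-\bp\|_{\HHH}^2,
\end{equation*}
so after that observation the entire corollary reduces to an algebraic rearrangement of $\|\by-\bp\|^2$ in terms of $\bx$, plus a sign check.

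First I would note that $\dist_{\DD}^2(\bx)=\|\bx-\by\|^2$ (by the definition of $\by$) and then apply the polarization identity to $\bx-\by=(\bx-\bp)+(\bp-\by)$, namely
\begin{equation*}
\|\bx-\by\|^2=\|\bx-\bp\|^2+\|\by-\bp\|^2-2\scal{\bx-\bp}{\by-\bp}.
\end{equation*}
Solving this for $\|\by-\bp\|^2$ and substituting into $d(\bx)=\|\by-\bp\|^2$ yields exactly the claimed identity \eqref{e:dist-g}.

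Second, for the sign of the inner product, I would invoke the standard variational characterization of the projection onto a nonempty closed convex set: since $\bp=\proj_{\CCX}(\by)$ and $\bx\in\CCX$ by hypothesis, one has $\scal{\by-\bp}{\bx-\bp}\leq 0$. This is the only place convexity/closedness of the $C_i$ (hence of $\CCX$) is used, and it is immediate from \cite{Livre1}.

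Finally, the consequence $0\leq d(\bx)\leq\dist_{\DD}^2(\bx)$ is a one-liner: the lower bound is clear from the definition \eqref{e:d} since $\omega_i>0$ and $\dist_{C_i}^2\geq 0$, and the upper bound follows by dropping the two nonpositive terms $-\|\bx-\bp\|^2$ and $2\scal{\bx-\bp}{\by-\bp}$ on the right-hand side of \eqref{e:dist-g}. There is no real obstacle here; the only subtlety is realizing that \eqref{e:24} already did the geometric work, so what remains is essentially the parallelogram identity together with one use of the projection inequality.
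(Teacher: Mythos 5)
Your proof is correct and is essentially the paper's argument: the paper's one-line proof cites precisely the norm-expansion identity (Lemma~2.12 of \cite{Livre1}) and the variational characterization of the projection (Theorem~3.16 of \cite{Livre1}), which are exactly the two facts you apply after identifying $d(\bx)=\|\by-\bp\|^2$ via \eqref{e:24}. One cosmetic remark: the expansion $\|\bx-\by\|^2=\|\bx-\bp\|^2+\|\by-\bp\|^2-2\scal{\bx-\bp}{\by-\bp}$ is just the expansion of the squared norm of a sum, not the polarization or parallelogram identity, but this does not affect the validity of the argument.
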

\begin{proof}
Follows from Lemma~2.12 and Theorem~3.16 of \cite{Livre1}.
\end{proof}

Since the iterates of Algorithm~\ref{alg:scg} always reside in
$\CCX$, Corollary~\ref{c:dg} reinforces our choice of $A\bx_t$ as
our approximate solution of \eqref{e:p}. Firstly, its implication
that
$\dist^2_{\DD}(\bx)=0\Rightarrow d(\bx)=0$ underlines the
observation from \eqref{e:p12} that $A\bx_t\in\CCN$ is easier to
satisfy than $\bx\in\DD\cap\CCX$. Furthermore, by characterizing
the gap between $d$ and $\dist_{\DD}^2$, we see that there are
plenty of points for which the inequality between $d$ and
$\dist_{\DD}^2$ is strict, e.g., those $\bx\in\CCX$ for which
$\bx\neq\bp$ (see also Fig.~\ref{fig:prod3}). Due to this
strictness, $d(\bx_t)$ may vanish far {\em before}
$\dist_{\DD}^2(\bx_t)$ vanishes over the iterations of
Algorithm~\ref{alg:scg}.
This is consistent with our preliminary numerical observations
that $A\bx_t\in\CCN$ often occurs before
$\dist^2_{\DD}(\bx_t)$ vanishes.

\begin{remark}
\label{rmk:g}
Another natural penalty to consider is
\begin{equation}
g\colon\bx\mapsto\dist_{\CCN}^2(A\bx)=
\|\proj_{\DD}\bx-\proj_{\DD\cap\CCX}\bx\|^2,
\end{equation}
although evaluating $\nabla
g=2A^*(\Id-\proj_{\CCN})(A\bx)$ involves computing an
intractable projection. While, for every $\bx\in\HH$, $g$ and $d$
(see \eqref{e:d}) have the order
\begin{equation}
d(\bx)=\sum_{i\in I}\omega_i \inf_{c\in C_i}\|A\bx-c\|^2\leq
\sum_{i\in I}\omega_i \inf_{c\in \bigcap_{i\in I}C_i}\|A\bx-c\|^2
=g(\bx),
\end{equation}
there is no general ordering between $g$ and our penalty
$\dist_{\DD}^2$ for $\dim(\HH)\geq 2$. However, using
\eqref{e:H}--\eqref{e:A} reveals that they are related in the
following geometric sense
\begin{equation}
\label{e:gbound}
\begin{multlined}
\sum_{i\in
I}\omega_i\|\bx^i-\proj_{\bigcap_{i\in
I}C_i}(A\bx)\|^2=
\|\bx-A^*A\bx+A^*A\bx-A^*\proj_{\CCN}(A\bx)\|^2\\
=g(\bx)+\dist_{\DD}^2(\bx)-
\underbrace{2\scal{A^*\proj_{\bigcap_{i\in
I}C_i}(A\bx)-A^*A\bx}{\bx-A^*A\bx}}_{=0}.
\end{multlined}
\end{equation}
Since the lefthand and righthand vectors in the scalar product are
in $\DD$ and $\DD^\perp$ respectively, $g$ and $\dist_{\DD}^2$
describe the squared magnitude of two orthogonal vectors. 
\end{remark}

\begin{figure}[t]
\centering
\includegraphics[width=10.5cm,trim={6cm 7.0cm 3.5cm
3.0cm},clip]{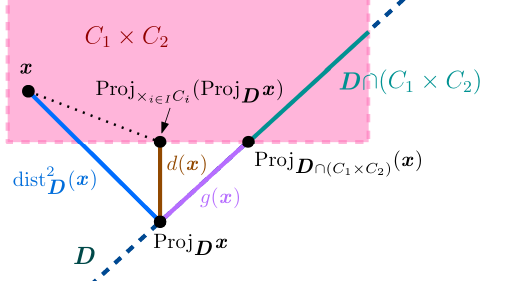}
\caption{
Zoomed view of Fig.~\ref{fig:prod} displaying
geometric relationships between $\dist_{\DD}^2$, $d$,
and $g$ (see \eqref{e:24} and Remark~\ref{rmk:g}).
This shows a feasible point $\bx\in\CCX$ and several vectors whose
squared magnitude are equal to the given labels.
Corollary~\ref{c:dg} describes the relative magnitude of
$\dist^2_{\DD}$ and $d$, as well as the obtuse angle between $\bx$,
$\proj_{\CCX}(\proj_{\DD}\bx)$, and $\proj_{\DD}\bx$.
Remark~\ref{rmk:g} describes the orthogonality seen in the angle
between $\bx$, $\proj_{\DD}\bx$, and $\proj_{\DD\cap\CCX}(\bx)$.
We also see
$\proj_{\DD\cap\CCX}\bx=\proj_{\DD\cap\CCX}(\proj_{\DD}\bx)$, which
holds in general
\cite[Prop.~24.18]{Livre1}.
}
\label{fig:prod2}
\end{figure}

\subsubsection{Interpolating constraints: From the Minkowski sum to
the intersection}
\label{sec:interp}

This section presents an analysis of how our subproblem
\eqref{e:49} changes with the parameter $\lambda$. In addition to
their utility in Section~\ref{sec:3} to prove that our sequence of
relaxations \eqref{e:49} actually solves the correct problem
\eqref{e:prod}, the results in this section show that \eqref{e:49}
connects two classical problems in optimization.

From a certain perspective, \eqref{e:49} ``interpolates'' from the
following problem (when $\lambda=0$) over the Minkowski sum
\begin{equation}
\label{e:mink}
\underset{x\in\CCM}{\text{minimize}}\;\;f(x),\quad\text{where}\quad
\CCM=\Menge{\sum_{i\in I}\omega_ic^i}{(\forall i\in I)\;\;
c^i\in C_i},
\end{equation}
to the splitting problem \eqref{e:p} (when
$\lambda\nearrow+\infty$). We shall make this latter observation
precise via several notions of convergence in
Proposition~\ref{p:conv}.

\begin{remark}
While this article is predominantly focused on \eqref{e:p}, it is
worth noting that, when $\lambda=0$, the problems
\eqref{e:49} and \eqref{e:mink} coincide in the sense that, for
every solution $\bx^*$ of \eqref{e:49}, $A\bx^*$ solves
\eqref{e:mink} (and for every solution $\sum_{i\in I}\omega_ix^i$
of \eqref{e:mink}, $(x^i)_{i\in I}$ solves \eqref{e:49}).
Therefore, Fact~\ref{f:22} leads to a Frank-Wolfe approach
to solving \eqref{e:mink}. The Minkowski sum constraint arises
in Bayesian learning, placement problems, and robot motion planning
\cite{Bern09,Duan19,WonX19,Loza79}.
\end{remark}

We begin with the following observations about how $F_{\lambda}$
relates as $\lambda$ varies.
\begin{lemma}
\label{l:Frel}
Let $f\colon\HH\to\RR$, let $\lambda,\Delta \in\RR$, let
$\DD\subset\HHH$ be nonempty, and set $F_{\lambda}\colon\bx\mapsto
f(A\bx)+\lambda\dist_{\DD}^2(\bx)/2$. Then,
\begin{equation}
\label{e:Frel}
(\forall \bx\in\HHH)\quad F_{\lambda}(\bx)=F_{\lambda+\Delta}(\bx)-
\Delta\frac{1}{2}\dist_{\DD}^2(\bx).
\end{equation}
In consequence, if $\Delta\geq 0$, then $F_{\lambda}(\bx)\leq
F_{\lambda+\Delta}(\bx)$ and
$$\inf F_{\lambda}(\CCX)\leq\inf F_{\lambda+\Delta}(\CCX).$$
\end{lemma}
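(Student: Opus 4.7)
The plan is to prove the identity by direct expansion of the definition of $F_{\lambda+\Delta}$, then deduce the two monotonicity statements by noting that the extra term is nonnegative when $\Delta\geq 0$.

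First, I would start from the definition $F_{\lambda+\Delta}(\bx)=f(A\bx)+(\lambda+\Delta)\tfrac{1}{2}\dist_{\DD}^2(\bx)$ and split the linear dependence in the penalty coefficient as $\lambda+\Delta = \lambda + \Delta$, giving
\begin{equation*}
F_{\lambda+\Delta}(\bx)=\underbrace{f(A\bx)+\lambda\tfrac{1}{2}\dist_{\DD}^2(\bx)}_{=F_{\lambda}(\bx)}+\Delta\tfrac{1}{2}\dist_{\DD}^2(\bx).
\end{equation*}
Rearranging yields the claimed identity \eqref{e:Frel}; this step requires no hypothesis on the sign of $\Delta$ nor any property of $\DD$ beyond nonemptiness (so that $\dist_{\DD}^2$ is well defined).

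Second, to obtain the inequality $F_\lambda(\bx)\leq F_{\lambda+\Delta}(\bx)$ under $\Delta\geq 0$, I would observe that $\dist_{\DD}^2(\bx)\geq 0$ since it is the squared distance to a nonempty set, so $\Delta\tfrac{1}{2}\dist_{\DD}^2(\bx)\geq 0$, and \eqref{e:Frel} gives the pointwise bound. Finally, to pass to infima over $\CCX$, I would fix an arbitrary $\bx\in\CCX$ and chain $\inf F_\lambda(\CCX)\leq F_\lambda(\bx)\leq F_{\lambda+\Delta}(\bx)$, then take the infimum of the rightmost expression over $\bx\in\CCX$.

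There is no real obstacle here; the statement is an immediate consequence of the linearity of $F_\lambda(\bx)$ in $\lambda$ and the nonnegativity of the penalty term. The only care needed is to keep the roles of $\lambda$ and $\Delta$ straight when rearranging, and to note that the infimum comparison uses only the pointwise inequality (no existence of minimizers is required).
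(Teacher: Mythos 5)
Your proposal is correct and is essentially the same argument as the paper's: the identity \eqref{e:Frel} is obtained by splitting the coefficient $\lambda+\Delta$ in the penalty term, and the monotonicity of values and infima follows from $\Delta\,\dist_{\DD}^2(\bx)/2\geq 0$ together with the standard pointwise-to-infimum comparison. Nothing is missing.
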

\begin{proof}
$F_{\lambda}(\bx)=f(A\bx)+(\lambda+\Delta)\dist_{\DD}^2(\bx)/2-\Delta\dist_{\DD}^2(\bx)/2=$

\noindent $F_{\lambda+\Delta}(\bx)-\Delta\dist_{\DD}^2(\bx)/2.$
\end{proof}

Next, we show that the optimal value of \eqref{e:49} is sandwiched
between that of the splitting problem \eqref{e:p} and the Minkowski
sum problem \eqref{e:mink}.

\begin{proposition}
\label{p:8}
Let $f\colon\HH\to\RR$, let $\lambda\geq 0$, let $\DD\subset\HHH$
be nonempty, set $F_{\lambda}\colon\bx\mapsto
f(A\bx)+\lambda\dist_{\DD}^2(\bx)/2$, and let $(C_i)_{i\in I}$ be a
collection of nonempty compact convex subsets of $\HH$ such that
$\CCN\neq\varnothing$. Then
\begin{align}
\label{e:F1}
\inf_{x\in \bigcap_{i\in I} C_i} f(x) \geq
\inf_{\bx\in\CCX}
F_{\lambda}
\geq \inf_{x\in\CCM} f(x).
\end{align}
\end{proposition}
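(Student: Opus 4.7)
The plan is to prove both inequalities directly by producing appropriate feasible points and exploiting how $A$ interacts with the constraints. The left inequality follows from embedding $\CCN$ diagonally into $\CCX$, while the right inequality follows from observing that $A$ maps $\CCX$ into $\CCM$ and that the penalty term is nonnegative.

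For the left inequality $\inf_{x \in \CCN} f(x) \geq \inf_{\bx \in \CCX} F_\lambda(\bx)$, I would fix an arbitrary $x \in \CCN$ and consider the constant vector $\bx = A^*x = (x,\ldots,x) \in \HHH$. Since $x \in C_i$ for every $i \in I$, this point lies in $\CCX$. By construction $\bx \in \DD$, so $\dist_{\DD}^2(\bx) = 0$, and $A\bx = \sum_{i \in I}\omega_i x = x$ because $\sum_i \omega_i = 1$. Therefore $F_\lambda(\bx) = f(A\bx) + \lambda \dist_{\DD}^2(\bx)/2 = f(x)$, which shows $\inf_{\bx \in \CCX} F_\lambda \leq f(x)$. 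Taking the infimum over $x \in \CCN$ (which is nonempty by hypothesis) yields the claim.

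For the right inequality $\inf_{\bx \in \CCX} F_\lambda \geq \inf_{x \in \CCM} f(x)$, I would take an arbitrary $\bx \in \CCX$. Then $A\bx = \sum_{i \in I} \omega_i \bx^i$ with each $\bx^i \in C_i$, which is precisely the defining form of an element of $\CCM$. Hence $f(A\bx) \geq \inf_{x \in \CCM} f(x)$. Combined with $\lambda \dist_{\DD}^2(\bx)/2 \geq 0$ (using $\lambda \geq 0$ and the nonnegativity of the squared distance), we get $F_\lambda(\bx) \geq \inf_{x \in \CCM} f(x)$. Taking the infimum over $\bx \in \CCX$ finishes the proof.

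Neither direction presents a real obstacle; the proof is essentially a bookkeeping exercise showing that the diagonal injection $x \mapsto (x,\ldots,x)$ and the averaging map $A$ respect the relevant constraints, together with the observation that the penalty $\dist_{\DD}^2$ both vanishes on the diagonal and is always nonnegative. The only subtle point is to remember that $\sum_i \omega_i = 1$ is what makes $A(x,\ldots,x) = x$, so the left-hand bound is tight at diagonal points.
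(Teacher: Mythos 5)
Your proof is correct and follows essentially the same route as the paper: the left inequality comes from the diagonal embedding $x\mapsto(x,\ldots,x)$ of $\CCN$ into $\DD\cap\CCX$, where the penalty vanishes and $A$ acts as the identity, and the right inequality comes from the nonnegativity of $\lambda\dist_{\DD}^2/2$ together with the fact that $A$ maps $\CCX$ into $\CCM$. The only cosmetic difference is that the paper packages these observations via the product-space formulation \eqref{e:prod} and Lemma~\ref{l:Frel}, whereas you verify them pointwise.
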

\begin{proof}
To show the first inequality, we note that
for every $\bx\in\DD$, $\dist_{\DD}^2(\bx)=0$, so using the
product space formulation \eqref{e:prod} of \eqref{e:p},
\begin{equation}
\inf_{x\in\bigcap_{i\in I} C_i}
f(x)=\inf_{\bx\in\DD\cap\CCX}f(A\bx)+\frac{\lambda}{2} \dist_{\DD}^2(\bx)
\geq\inf_{\bx\in\CCX}F_{\lambda}(\bx).
\end{equation}
The second inequality follows from the observation that
\eqref{e:mink} coincides with \newline
$\inf_{\bx\in\CCX}F_{0}(\bx)$, so by
Lemma~\ref{l:Frel} we have
$\inf_{\bx\in\CCX}F_{\lambda}(\bx)\geq
\inf_{\bx\in\CCX}F_{0}(\bx)$.
\end{proof}

It turns out that, for an increasing sequence
of penalty parameters $(\lambda_n)_{n\in\NN}$, the ordering of
Proposition~\ref{p:8} is preserved if we only consider the optimal
values of $f$ (instead of $F_{\lambda}$). Intuitively, the order is
reversed when we compare optimal values of the penalty
$\dist_{\DD}^2$.

\begin{corollary}
Let $f\colon\HH\to\RR$, let $\DD\subset\HHH$ be nonempty, set
$F_{\lambda}\colon\bx\mapsto f(A\bx)+\lambda\dist_{\DD}^2(\bx)/2$,
and let $(C_i)_{i\in I}$ be a collection of nonempty compact convex
subsets of $\HH$ such that $\CCN\neq\varnothing$. 
Suppose that $(\lambda_t)_{t\in\NN}$ is an increasing
sequence of nonnegative real numbers and,
for every $t\in\NN$, let $\bx_t^*$ be a minimizer of
$F_{\lambda_t}$ over $\CCX$. Then
\begin{equation}
\inf_{x\in \bigcap_{i\in I} C_i} f(x) \geq
f\left(A\bx_{t+1}^*\right)\geq
f\left(A\bx_{t}^*\right)\geq
\inf_{x\in\CCM} f\left(x\right).
\end{equation}
If $\bz\in\Argmin_{\bx\in\bigtimes_{i\in I} C_i}
f\left(A\bx\right)$ (i.e., $A\bz$ solves \eqref{e:mink}), then
\begin{equation}
0\leq \dist_{\DD}^2(\bx_{t+1}^*)\leq \dist_{\DD}^2(\bx_t^*)\leq
\dist_{\DD}^2(\bz).
\end{equation}
\end{corollary}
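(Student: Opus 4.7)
The plan is to chain together the two preceding results: Lemma~\ref{l:reg} provides the monotonicity of objective and penalty values as the penalty parameter grows, while Proposition~\ref{p:8} supplies the outer sandwich. Throughout I will apply Lemma~\ref{l:reg} with the substitutions $\widetilde f = f\circ A$, $h = \dist_{\DD}^2/2$, and $C = \CCX$, so that $\widetilde f + \mu h = F_\mu$ restricted to $\CCX$.

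First I would establish the outer bounds of the first chain. For the upper bound, Proposition~\ref{p:8} yields
$$\inf_{x\in\CCN} f(x)\;\geq\;\inf_{\bx\in\CCX}F_{\lambda_{t+1}}(\bx)\;=\;F_{\lambda_{t+1}}(\bx_{t+1}^*)\;=\;f(A\bx_{t+1}^*)+\tfrac{\lambda_{t+1}}{2}\dist_{\DD}^2(\bx_{t+1}^*)\;\geq\;f(A\bx_{t+1}^*),$$
where the last step uses $\lambda_{t+1}\geq 0$ and $\dist_{\DD}^2\geq 0$. For the lower bound, $A\bx_t^*\in A(\CCX)=\CCM$ gives $f(A\bx_t^*)\geq \inf_{x\in\CCM}f(x)$ immediately.

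Next I would obtain the two middle (monotonicity) inequalities by applying Lemma~\ref{l:reg} with $\lambda = \lambda_t$, $\Delta = \lambda_{t+1}-\lambda_t\geq 0$, $x = \bx_t^*$, $z = \bx_{t+1}^*$. When $\Delta > 0$, the lemma yields both $f(A\bx_t^*)\leq f(A\bx_{t+1}^*)$ and $h(\bx_{t+1}^*)\leq h(\bx_t^*)$, i.e.\ $\dist_{\DD}^2(\bx_{t+1}^*)\leq \dist_{\DD}^2(\bx_t^*)$; when $\Delta = 0$ the inequalities degenerate to equalities (with any consistent choice of minimizers), so the claim holds in either case. The final inequality $\dist_{\DD}^2(\bx_t^*)\leq \dist_{\DD}^2(\bz)$ is obtained by one more application of Lemma~\ref{l:reg}, this time with $\lambda = 0$, $\Delta = \lambda_t$, $x = \bz$ (a minimizer of $F_0 = f\circ A$ over $\CCX$), $z = \bx_t^*$; the lemma's conclusion $h(\bx_t^*)\leq h(\bz)$ is exactly what is asked. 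The nonnegativity $0\leq \dist_{\DD}^2(\bx_{t+1}^*)$ is trivial.

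Since the entire argument is essentially bookkeeping on top of Lemma~\ref{l:reg} and Proposition~\ref{p:8}, I expect no real obstacle: the only subtlety is the mild edge case $\lambda_t = \lambda_{t+1}$ (or $\lambda_0 = 0$), where Lemma~\ref{l:reg} as stated requires $\Delta>0$, but in that case both claimed inequalities reduce to equalities upon choosing compatible minimizers, so nothing is lost.
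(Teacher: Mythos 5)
Your argument is exactly the paper's: its proof of this corollary reads ``Follows from Lemma~\ref{l:reg} and Proposition~\ref{p:8},'' and you have simply spelled out that combination (Lemma~\ref{l:reg} with $f\circ A$, $h=\dist_{\DD}^2/2$, $C=\CCX$ for the monotonicity and the comparison with $\bz$, and Proposition~\ref{p:8} plus nonnegativity of the penalty for the outer bounds), which is correct. Your side remarks on the degenerate cases $\Delta=0$ and $\lambda=0$ are fine and, if anything, slightly more careful than the paper's one-line proof.
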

\begin{proof}
Follows from Lemma~\ref{l:reg} and Proposition~\ref{p:8}.
\end{proof}

The following example demonstrates that the penalty sequence
$(\lambda_t)_{t\in\NN}$ may need to tend to $+\infty$ in order 
for the solutions of \eqref{e:49} and \eqref{e:p} to coincide.
\begin{example}
\label{ex:linf}
Set $\HH=\RR$, set $f=\|x\|^2/2$, let $z\geq0$, set $C_1=\{z\}$,
and set $C_2=[-z-1,z+1]$. Clearly, $z=\Argmin_{x\in C_1\cap
C_2}f(x)$. However,
it is straightforward to verify that, for every $\lambda\geq 0$,
$\bx^*_{\lambda}=((\lambda-1)z/(1+\lambda),z)$ is
the unique minimizer of $F_{\lambda}$ over $C_1\times C_2$.
Since $A\bx^*_{\lambda}=\lambda z/(1+\lambda)\neq z$, the solutions
of \eqref{e:49} and \eqref{e:p} (via \eqref{e:prod}) do not
coincide for finite $\lambda$; taking $\lambda\to+\infty$ implies
$A\bx^*_{\lambda}\to z$.
\end{example}

The following result establishes three notions of convergence
(see Definition~\ref{def:conv})
relating the problems \eqref{e:49} and \eqref{e:p} (via its
equivalent product space formulation \eqref{e:prod}). For this
result, we rely on the fact that every constrained optimization
problem can be described using a single objective function via the
use of indicator functions.

\begin{proposition}
\label{p:conv}
Let $f\colon\HH\to\RR$, let $(C_i)_{i\in I}$ be a collection of
nonempty compact convex subsets of $\HH$ such that $\bigcap_{i\in
I}C_i\neq\varnothing$, and let $\DD$ denote the diagonal subspace
of $\HHH$. Suppose that $(\lambda_t)_{t\in\NN}\to+\infty$ and, for
every $t\in\NN$, set $\boldsymbol{f}_t= f\circ
A+\lambda_t\dist_{\DD}^2/2+\iota_{\bigtimes_{i\in I}C_i}$. Then the
following hold.
\begin{enumerate}
\item
\label{p:convp}
$\boldsymbol{f}_t$ converges pointwise to $f\circ A+\iota_{\DD\cap\CCX}$.
\item
\label{p:conve}
Suppose $\HH=\mathbb{R}^n$. Then
$\boldsymbol{f}_t$ converges epigraphically to $f\circ
A+\iota_{\DD\cap\CCX}$.
\item
\label{p:convg}
Suppose $\HH=\mathbb{R}^n$ and $f$ is convex. Then
$\partial \boldsymbol{f}_n$ converges graphically to $\partial
(f\circ A+\iota_{\DD\cap\CCX})$.
\end{enumerate}
\end{proposition}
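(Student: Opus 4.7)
The plan is to address the three statements separately, with (i) being an elementary case analysis, (ii) obtained by upgrading (i) to an epi-convergence statement, and (iii) obtained by invoking Attouch's theorem on convex epi-convergence.

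For \ref{p:convp}, I would partition $\HHH$ into three cases and evaluate $\boldsymbol{f}_t(\bx)$ pointwise. If $\bx\notin\CCX$, then $\boldsymbol{f}_t(\bx)=+\infty=f(A\bx)+\iota_{\DD\cap\CCX}(\bx)$ for every $t\in\NN$. If $\bx\in\CCX\setminus\DD$, then $\dist_{\DD}^2(\bx)>0$ and, since $\lambda_t\to+\infty$, we have $\boldsymbol{f}_t(\bx)\to+\infty$, which matches $\iota_{\DD\cap\CCX}(\bx)=+\infty$. Finally, if $\bx\in\DD\cap\CCX$, then $\dist_{\DD}^2(\bx)=0$ and $\boldsymbol{f}_t(\bx)=f(A\bx)$ for every $t$, which is the value of the claimed limit.

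For \ref{p:conve}, with $\HH=\RR^n$, I would invoke the standard characterization of epi-convergence \cite[Prop.~7.2]{Rock09}: it suffices to verify, for every $\bx\in\HHH$, that (a) $\liminf_{t\to\infty}\boldsymbol{f}_t(\bx_t)\geq f(A\bx)+\iota_{\DD\cap\CCX}(\bx)$ whenever $\bx_t\to\bx$, and (b) there exists a recovery sequence $\bx_t\to\bx$ along which $\limsup_{t\to\infty}\boldsymbol{f}_t(\bx_t)\leq f(A\bx)+\iota_{\DD\cap\CCX}(\bx)$. The recovery sequence in (b) is just the constant sequence $\bx_t\equiv\bx$, for which \ref{p:convp} supplies the required bound in every case. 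The liminf inequality (a) is again handled by case analysis: closedness of $\CCX$ ensures $\bx_t\notin\CCX$ eventually when $\bx\notin\CCX$; when $\bx\in\CCX\setminus\DD$, continuity of $\dist_{\DD}$ and of $f\circ A$ together with $\lambda_t\to+\infty$ force $\boldsymbol{f}_t(\bx_t)\to+\infty$; and when $\bx\in\DD\cap\CCX$, continuity of $f\circ A$ and nonnegativity of the penalty and indicator terms give $\liminf\boldsymbol{f}_t(\bx_t)\geq f(A\bx)$.

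For \ref{p:convg}, assuming $f$ convex and $\HH=\RR^n$, I would first check that each $\boldsymbol{f}_t$ and the limit $f\circ A+\iota_{\DD\cap\CCX}$ are proper, lower semicontinuous, and convex on $\RR^n$; this follows because $A$ is linear, $\dist_{\DD}^2$ and $f$ are continuous and convex, $\CCX$ and $\DD$ are closed and convex, and $\DD\cap\CCX\neq\varnothing$ by Proposition~\ref{prop:decomp} and the hypothesis $\CCN\neq\varnothing$. Graphical convergence of the subdifferentials then follows from the epi-convergence established in \ref{p:conve} via Attouch's theorem \cite[Thm.~12.35]{Rock09}. The main obstacle is really the liminf half of \ref{p:conve}: one must rule out pathological sequences $\bx_t\to\bx\in\CCX\setminus\DD$ along which the penalty term could fail to blow up, which is precisely why continuity of $\dist_{\DD}$ combined with $\lambda_t\to+\infty$ (rather than just pointwise vanishing of the distance) is essential.
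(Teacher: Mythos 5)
Your proposal is correct and follows essentially the same route as the paper: a pointwise case analysis, epi-convergence via the liminf/limsup characterization of \cite[Proposition~7.2]{Rock09} with the constant recovery sequence, and Attouch's theorem \cite[Theorem~12.35]{Rock09} for graphical convergence of the subdifferentials. The only cosmetic difference is that you argue directly on $\boldsymbol{f}_t$ (and explicitly verify properness, lower semicontinuity, and convexity before invoking Attouch), whereas the paper first reduces to showing $\lambda_t\dist_{\DD}^2/2\to\iota_{\DD}$ and handles the no-escape case via a ball disjoint from $\DD$; both are sound.
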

\begin{proof}
Since 
\begin{equation}
\iota_{\bigtimes_{i\in I}C_i}+\iota_{D}=\iota_{D\cap\bigtimes_{i\in
I}C_i},
\end{equation}
it suffices to show that $\lambda_t\dist_{\DD}^2/2$ converges to
$\iota_{D}$ under each notion of convergence. \newline
\ref{p:convp}: Let $\bx\in\HHH$. If $\bx\in\DD$, then for every
$n\in\NN$,
$\lambda_t\dist_{\DD}^2(\bx)/2=0=\iota_{\DD}(\bx)$. On the other
hand, if $\bx\not\in\DD$, then $0<\lambda_t\dist_{\DD}^2(\bx)/2\to
+\infty=\iota_{D}(\bx)$.
\newline
\ref{p:conve}:
Let $\bx\in\HHH$.
By \cite[Proposition~7.2]{Rock09},
it suffices to show both of the following.
\begin{align}
\label{e:conve1}
\text{For some sequence}\;(\bx_t)_{t\in\NN}\text{ converging to }&
\bx,\quad \lim\sup_{t\in\NN}\frac{\lambda_t}{2}\dist_{\DD}^2(\bx_t)\leq
\iota_{\DD}(\bx).\\
\label{e:conve2}
\text{For every sequence}\;(\bx_t)_{t\in\NN}\text{ converging to }&
\bx,\quad\lim\inf_{t\in\NN}\frac{\lambda_t}{2}\dist_{\DD}^2(\bx_t)\geq
\iota_{\DD}(\bx).
\end{align}
To realize \eqref{e:conve1}, we consider the constant sequence
$(\bx_t)_{t\in\NN}\equiv\bx$. By \ref{p:convp},
\begin{equation}
\lim\sup_{t\in\NN}\lambda_t\dist_{\DD}^2(\bx_t)/2=
\lim_{t\in\NN}\lambda_t\dist_{\DD}^2(\bx)/2=\iota_{\DD}(\bx),
\end{equation}
so this is always satisfied with equality. To show
\eqref{e:conve2}, let $(\bx_t)_{t\in\NN}$ be a sequence converging to
$\bx$. If $\bx\in\DD$, then since $\dist_{\DD}^2\geq 0$ and
$\iota_{\DD}(\bx)=0$, \eqref{e:conve2} holds.
Otherwise, if $\bx\not\in\DD$, then
there exists a radius $\varepsilon>0$ such that $B(\bx;\varepsilon)\cap
\DD=\varnothing$. Since $\dist_{\DD}^2$ is continuous and only
vanishes on $\DD$,
we know $\eta:=\inf_{\by\in
B(\bx;\varepsilon/2)}\dist_{\DD}^2(\by)/2>0$. Therefore,
since $\bx_t\to\bx$, we have that, for some $N\in\NN$, $n>N$ implies
that $\bx_t\in B(\bx;\varepsilon/2)$, hence
\begin{equation}
\frac{\lambda_t}{2} \dist_{\DD}^2(\bx_t)\geq\lambda_t\eta\to +\infty.
\end{equation}
In particular, $\lim_{t\in\NN}\lambda_t
\dist_{\DD}^2(\bx_t)/2=+\infty=\iota_{\DD}(\bx)$ so we are done.
\newline
\ref{p:convg}:
Follows from \ref{p:conve} and Attouch's Theorem
\cite[Theorem~12.35]{Rock09}.
\end{proof}

In general, the functions in Proposition~\ref{p:conv} do not
converge uniformly\footnote{Uniform convergence for extended-real
valued functions is defined in \cite{Rock09}.}.
In spite of this, it turns out that one can nonetheless commute the
limit with an infimum, hence showing that the optimal
values of our subproblems \eqref{e:49} converge to the optimal
value of \eqref{e:p}.

\begin{proposition}
\label{p:fconv}
Let $f\colon\HH\to\RR$, let $(C_i)_{i\in I}$ be a
collection of nonempty compact convex subsets of $\HH$, let
$\DD$ denote the diagonal subspace of $\HHH$, and for every
$\lambda\geq 0$, set $F_{\lambda}\colon\bx\mapsto
f(A\bx)+\lambda\dist_{\DD}^2(\bx)/2$. Suppose that
$(\lambda_n)_{n\in\NN}\nearrow +\infty$. Then
\begin{equation}
\label{e:308}
\lim_{t\to+\infty}\left(\inf_{\bx\in\CCX}F_{\lambda_t}(\bx)\right)
\to
\inf_{\bx\in\CCX}\left(\lim_{t\to\infty}F_{\lambda_t}(\bx)\right)
=\inf_{x\in\CCN}f(x).
\end{equation}
\end{proposition}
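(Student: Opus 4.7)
The strategy is to handle the two equalities in \eqref{e:308} separately, since they are of quite different character. The righthand equality is essentially bookkeeping: Proposition~\ref{p:conv}\ref{p:convp} gives the pointwise limit $\lim_{t\to\infty}F_{\lambda_t}(\bx)=f(A\bx)+\iota_{\DD}(\bx)$ for every $\bx\in\HHH$, so
$$\inf_{\bx\in\CCX}\Bigl(\lim_{t\to\infty}F_{\lambda_t}(\bx)\Bigr)=\inf_{\bx\in\DD\cap\CCX}f(A\bx).$$
Applying Proposition~\ref{prop:decomp} to identify $\DD\cap\CCX$ with the diagonal lift $\{(x,\ldots,x)\mid x\in\CCN\}$, on which $A\bx=x$, converts this infimum into $\inf_{x\in\CCN}f(x)$.

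For the lefthand equality, Lemma~\ref{l:Frel} shows that the sequence $t\mapsto\inf_{\bx\in\CCX}F_{\lambda_t}(\bx)$ is nondecreasing, and Proposition~\ref{p:8} provides the upper bound $\inf_{x\in\CCN}f(x)$ (finite, by compactness of $\CCN$ and continuity of $f$). Hence the limit $L$ exists in $\RR$ and satisfies $L\leq\inf_{x\in\CCN}f(x)$. To obtain the reverse inequality I would pick minimizers $\bx_t^*\in\Argmin_{\bx\in\CCX}F_{\lambda_t}$ --- which exist by compactness of $\CCX$ and continuity of $F_{\lambda_t}$ --- and extract a subsequence (still written $\bx_t^*$) converging to some $\bx^*\in\CCX$. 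For any fixed $c^*\in\CCN$, the diagonal lift $(c^*,\ldots,c^*)$ lies in $\DD\cap\CCX$ and satisfies $F_{\lambda_t}(c^*,\ldots,c^*)=f(c^*)$, so optimality of $\bx_t^*$ yields
$$\frac{\lambda_t}{2}\dist_{\DD}^2(\bx_t^*)\leq f(c^*)-f(A\bx_t^*)\leq f(c^*)-\min_{\bx\in\CCX}f(A\bx),$$
a uniform bound. Combined with $\lambda_t\to+\infty$, this forces $\dist_{\DD}^2(\bx_t^*)\to 0$; continuity of $\dist_{\DD}^2$ then gives $\bx^*\in\DD$, and Proposition~\ref{prop:decomp} upgrades this to $A\bx^*\in\CCN$. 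Finally, since the penalty term is nonnegative, $F_{\lambda_t}(\bx_t^*)\geq f(A\bx_t^*)$, and continuity of $f\circ A$ along the subsequence gives
$$L=\lim_{t\to\infty}F_{\lambda_t}(\bx_t^*)\geq f(A\bx^*)\geq\inf_{x\in\CCN}f(x),$$
closing the sandwich.

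The main obstacle is showing that accumulation points of the minimizers $\bx_t^*$ are feasible for the limit problem, i.e.\ that they land in $\DD$. This is precisely where the hypothesis $\lambda_t\nearrow+\infty$ enters essentially: comparison against the diagonal lift of any feasible $c^*\in\CCN$ yields an $\mathcal{O}(1/\lambda_t)$ bound on $\dist_{\DD}^2(\bx_t^*)$, and only then can the final lower bound be closed. The remaining ingredients --- monotonicity of the infima, compactness-based extraction of a convergent subsequence, and the trivial bound $F_{\lambda_t}\geq f\circ A$ --- are essentially automatic.
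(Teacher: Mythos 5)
Your proof is correct, but it takes a different route from the paper's. The paper handles the lefthand equality with a threshold argument: for any $\mu<\inf_{x\in\CCN}f(x)$, the pointwise convergence $F_{\lambda_t}(\bx)\to f(A\bx)+\iota_{\DD}(\bx)$ from Proposition~\ref{p:conv}, combined with compactness of $\CCX$ (implicitly a Dini-type finite-cover argument, using that $t\mapsto F_{\lambda_t}(\bx)$ is nondecreasing and each $F_{\lambda_t}$ is continuous), forces $\inf_{\bx\in\CCX}F_{\lambda_t}\geq\mu$ for large $t$; Proposition~\ref{p:8} supplies the matching upper bound and one lets $\mu\uparrow\inf_{\CCN}f$. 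You instead run the classical penalty-method argument: take exact minimizers $\bx_t^*$, compare against a diagonal lift of a feasible $c^*\in\CCN$ to get the quantitative bound $\dist_{\DD}^2(\bx_t^*)\leq\mathcal{O}(1/\lambda_t)$, extract a convergent subsequence, and conclude feasibility of the accumulation point before closing the sandwich with continuity of $f\circ A$. Your version buys more information --- explicit feasibility decay of the minimizers and the fact that their accumulation points solve \eqref{e:p}, in the spirit of what Theorem~\ref{t:convex} later proves for the algorithm's iterates --- at the price of explicitly needing $\CCN\neq\varnothing$ to pick $c^*$ (a hypothesis the proposition's statement omits, though the paper's own proof also needs it implicitly through Propositions~\ref{p:conv} and~\ref{p:8}); the paper's argument is shorter and leans entirely on the convergence results of Section~\ref{sec:interp}, with its compactness step left terse. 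Your treatment of the righthand equality (pointwise limit plus Proposition~\ref{prop:decomp}) coincides with the paper's.
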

\begin{proof}
First, we point out that the equality in \eqref{e:308} follows from
Proposition~\ref{p:conv} and the fact that the minimal values of
\eqref{e:p} and \eqref{e:prod} coincide. Let
$$\mu<\inf_{x\in\CCN}f(x)
=\inf_{\bx\in\CCX}f(A\bx)+\iota_{\DD}(\bx).$$
By Proposition~\ref{p:conv}, for every $\bx\in\CCX$,
$\lim_{t\to\infty}F_{\lambda_t}(\bx)=f(A\bx)+\iota_{\DD}(\bx)>\mu$.
Since $\CCX$ is compact, for $t\in\NN$ sufficiently large,
$\inf_{\bx\in\CCX} F_{\lambda_t}(\bx)\geq\mu$, which implies
(via Proposition~\ref{p:8} for the second inequality) 
\begin{equation}
\mu\leq\lim_{t\to\infty}\left(\inf_{\bx\in\CCX}
F_{\lambda_t}(\bx)\right)\leq\inf_{x\in\CCN}f(x).
\end{equation}
Taking $\mu\uparrow\inf_{x\in\CCN}f(x)$ completes the result.
\end{proof}

\section{Convergence of Algorithm~\ref{alg:scg}}
\label{sec:3}
We first prove that Algorithm~\ref{alg:scg} converges in function
value when $f$ is convex (Section~\ref{sec:cvx}). Then, we
establish guarantees for stationarity in general
(Section~\ref{sec:ncv}). We begin with an estimate which is used
for both settings.

\begin{lemma}
\label{l:gbound}
Let $(C_i)_{i\in I}$ be a finite collection of nonempty compact
convex subsets of $\HH$ with diameters $\{R_i\}_{i\in
I}\subset\left[0,+\infty\right[$, and let $\DD$ denote
the diagonal subspace of $\HHH$. Suppose that $\CCN\neq\varnothing$. Then
\begin{equation}
\left(\forall
\bx,\by\in\CCX\right)\quad\dist_{\DD}^2(\bx)\leq\sum_{i\in
I}\omega_iR_i^2\quad\text{and}\quad \|\bx-\by\|^2\leq\sum_{i\in
I}\omega_iR_i^2.
\end{equation}
\end{lemma}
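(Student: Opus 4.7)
The plan is to prove both inequalities by direct computation from the definition of the inner product on $\HHH$ given in \eqref{e:H}, exploiting the fact that componentwise distances inside each $C_i$ are bounded by the diameter $R_i$.

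For the second inequality, I would start from $\bx,\by\in\CCX$, so $\bx^i,\by^i\in C_i$ for every $i\in I$. Expanding the product-space norm using \eqref{e:H} gives
\[
\|\bx-\by\|_{\HHH}^2=\sum_{i\in I}\omega_i\|\bx^i-\by^i\|_{\HH}^2
\leq\sum_{i\in I}\omega_i R_i^2,
\]
where the inequality is the definition of $R_i$ as the diameter of $C_i$. This part is routine.

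For the first inequality, the standing assumption $\CCN\neq\varnothing$ is what lets us produce a convenient element of $\DD$. I would pick any $c\in\CCN$ and set $\bz=A^*c=(c,\ldots,c)\in\DD$. Since $\dist_{\DD}^2(\bx)=\inf_{\bz'\in\DD}\|\bx-\bz'\|_{\HHH}^2$, this particular $\bz$ yields an upper bound. Crucially, because $c\in C_i$ for every $i\in I$ and $\bx^i\in C_i$, we can use the diameter bound componentwise:
\[
\dist_{\DD}^2(\bx)\leq\|\bx-\bz\|_{\HHH}^2
=\sum_{i\in I}\omega_i\|\bx^i-c\|_{\HH}^2
\leq\sum_{i\in I}\omega_i R_i^2.
\]
Note that using instead the closed form $\proj_{\DD}\bx=A^*A\bx$ from \eqref{e:props} would give $\dist_{\DD}^2(\bx)=\sum_{i\in I}\omega_i\|A\bx-\bx^i\|^2$, but $A\bx$ lies in the Minkowski sum $\CCM$ rather than in each $C_i$, so the diameter bound would not apply directly. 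Using a common feasible point $c\in\CCN$ sidesteps this issue, and is exactly where the hypothesis $\CCN\neq\varnothing$ enters. There is no real obstacle here beyond recognizing that the assumption $\CCN\neq\varnothing$ must be used nontrivially for the first inequality.
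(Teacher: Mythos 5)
Your proof is correct. The second inequality matches the paper's verbatim. For the first inequality, the underlying estimate is the same as the paper's -- compare $\bx$ to a constant diagonal copy of a point in $\CCN$ and bound each component by the diameter $R_i$ -- but the justification route differs slightly: the paper takes the specific point $\proj_{\CCN}(A\bx)$ and invokes the orthogonal decomposition \eqref{e:gbound} from Remark~\ref{rmk:g}, which gives $\dist_{\DD}^2(\bx)\leq\sum_{i\in I}\omega_i\|\bx^i-\proj_{\CCN}(A\bx)\|^2$ after dropping the nonnegative term $g(\bx)$, whereas you simply use the definition of $\dist_{\DD}$ as an infimum and test it against $A^*c=(c,\ldots,c)\in\DD$ for an arbitrary $c\in\CCN$. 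Your version is marginally more elementary and self-contained (it needs no machinery beyond \eqref{e:H}), while the paper's version reuses an identity it has already established and implicitly records the sharper statement that the slack in the bound is exactly $g(\bx)$. Your remark about why the closed form $\proj_{\DD}\bx=A^*A\bx$ cannot be combined directly with the diameter bound, and why $\CCN\neq\varnothing$ is genuinely needed for the first inequality, is accurate.
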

\begin{proof}
Since, for every $i\in I$, $\proj_{\CCN}(A\bx)\in C_i$,
\eqref{e:gbound} yields the upper bound 
\begin{equation}
\dist_{\DD}^2(\bx)\leq\sum_{i\in
I}\omega_i\|\bx^{i}-\proj_{\CCN}(A\bx)\|^2\leq\sum_{i\in
I}\omega_iR_i^2.
\end{equation}
For the second bound, $\|\bx-\by\|^2=\sum_{i\in
I}\omega_i\|\bx^i-\by^i\|^2\leq\sum_{i\in I}\omega_iR_i^2$.
\end{proof}


\subsection{Convex setting}
\label{sec:cvx}
Here we show that, if $f$ is convex, Algorithm~\ref{alg:scg}
achieves an $\mathcal{O}(\ln t/\sqrt{t})$ convergence rate in terms
of the primal value gap of our subproblems \eqref{e:49}. In tandem
with Proposition~\ref{p:conv}, this establishes function value
convergence. Unlike the Augmented Lagrangian approaches
\cite{Gide18,Fall19,Yurt19}, our analysis does not require
further assumptions concerning the relative interiors of
$(C_i)_{i\in I}$, making it consistent with traditional 
Frank-Wolfe theory \cite[Section~2.1]{CGsurvey}.

\begin{lemma}
\label{l:rec}
Let $f$ be convex and $L_f$-smooth, let $\DD$ denote the
diagonal subspace of $\HHH$, let $(C_i)_{i\in I}$ be a finite
collection of nonempty compact convex subsets of $\HH$ with
diameters $\{R_i\}_{i\in I}\subset\left[0,+\infty\right[$ 
such that $\CCN\neq\varnothing$, and for every $\lambda\geq 0$, set
$F_{\lambda}\colon\HHH\to\left]-\infty,+\infty\right]\colon\bx\mapsto
f(A\bx)+\lambda\dist_{\DD}^2(\bx)/2$, set
$\bx_t^*\in\Argmin_{\bx\in\CCX}F_{\lambda_t}(\bx)$, and set 
$H_{t}=F_{\lambda_t}(\bx_t)-F_{\lambda_t}(\bx_t^*)$.
Suppose that $(\lambda_t)_{t\in\NN}$ is an increasing sequence.
Then the iterates of Algorithm~\ref{alg:scg} satisfy 
\begin{equation}
\label{e:rec}
H_{t+1}\leq
(1-\gamma_t)
H_t
+\frac{(\lambda_{t+1}-\lambda_t)}{2}\sum_{i\in I}\omega_iR_i^2
+\gamma_t^2\frac{(\lambda_t+L_f)}{2}\sum_{i\in I}\omega_iR_i^2.
\end{equation}
\end{lemma}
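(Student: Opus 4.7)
The plan is to combine the classical Frank--Wolfe one-step descent on the penalized subproblem \eqref{e:49} at parameter $\lambda_t$ with the parameter-monotonicity supplied by Lemma~\ref{l:Frel}, which will account for the extra $(\lambda_{t+1}-\lambda_t)$ term when passing from the gap at $\lambda_t$ to the gap $H_{t+1}$ at $\lambda_{t+1}$.

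First I would record that, under the convexity assumption of Section~\ref{sec:cvx}, $F_{\lambda_t}=f\circ A+\lambda_t\dist_{\DD}^2/2$ is convex and, by Fact~\ref{f:gsmooth} and the $L_f$-smoothness of $f$, is $(L_f+\lambda_t)$-smooth. Applying the descent lemma to the update $\bx_{t+1}=\bx_t+\gamma_t(\bv_t-\bx_t)$ gives
\begin{equation*}
F_{\lambda_t}(\bx_{t+1})\leq F_{\lambda_t}(\bx_t)+\gamma_t\scal{\nabla F_{\lambda_t}(\bx_t)}{\bv_t-\bx_t}+\gamma_t^2\tfrac{L_f+\lambda_t}{2}\|\bv_t-\bx_t\|^2.
\end{equation*}
Lines~\ref{a:block}--\ref{a:endblock} together with Fact~\ref{f:22} ensure $\bv_t\in\Argmin_{\bv\in\CCX}\scal{\nabla F_{\lambda_t}(\bx_t)}{\bv}$, so comparing with $\bx_t^*\in\CCX$ and invoking convexity of $F_{\lambda_t}$ gives the Frank--Wolfe primal-gap bound $\scal{\nabla F_{\lambda_t}(\bx_t)}{\bv_t-\bx_t}\leq F_{\lambda_t}(\bx_t^*)-F_{\lambda_t}(\bx_t)=-H_t$. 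Bounding $\|\bv_t-\bx_t\|^2$ by $\sum_{i\in I}\omega_iR_i^2$ via Lemma~\ref{l:gbound} then yields
\begin{equation*}
F_{\lambda_t}(\bx_{t+1})-F_{\lambda_t}(\bx_t^*)\leq(1-\gamma_t)H_t+\gamma_t^2\tfrac{L_f+\lambda_t}{2}\sum_{i\in I}\omega_iR_i^2.
\end{equation*}

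To convert this into a bound on $H_{t+1}$, I would use Lemma~\ref{l:Frel} with $\Delta=\lambda_{t+1}-\lambda_t\geq 0$ in two ways: first as an equality, $F_{\lambda_{t+1}}(\bx_{t+1})=F_{\lambda_t}(\bx_{t+1})+\tfrac{\lambda_{t+1}-\lambda_t}{2}\dist_{\DD}^2(\bx_{t+1})$, and second as the monotonicity of optimal values on $\CCX$, which gives $F_{\lambda_t}(\bx_t^*)=\inf F_{\lambda_t}(\CCX)\leq\inf F_{\lambda_{t+1}}(\CCX)=F_{\lambda_{t+1}}(\bx_{t+1}^*)$. Adding the equality to both sides of the previous display and using the monotonicity inequality to replace $F_{\lambda_t}(\bx_t^*)$ by the (larger) value $F_{\lambda_{t+1}}(\bx_{t+1}^*)$, then bounding $\dist_{\DD}^2(\bx_{t+1})\leq\sum_{i\in I}\omega_iR_i^2$ via Lemma~\ref{l:gbound}, produces exactly \eqref{e:rec}.

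The main obstacle is the bookkeeping across two different penalty parameters: one has to verify that switching the comparator from $\bx_t^*$ (optimal for $F_{\lambda_t}$) to $\bx_{t+1}^*$ (optimal for $F_{\lambda_{t+1}}$) goes in the helpful direction, which is precisely where the hypothesis that $(\lambda_t)_{t\in\NN}$ is increasing gets used (via Lemma~\ref{l:Frel}). Everything else is a direct adaptation of the standard Frank--Wolfe one-step descent analysis, specialized to the product-space iterate via Fact~\ref{f:22}.
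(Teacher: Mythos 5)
Your proposal is correct and follows essentially the same route as the paper's proof: one Frank--Wolfe descent step on $F_{\lambda_t}$ (descent lemma, LMO optimality via Fact~\ref{f:22}, convexity, and the diameter bound of Lemma~\ref{l:gbound}), followed by Lemma~\ref{l:Frel} used both as the identity $F_{\lambda_{t+1}}=F_{\lambda_t}+\tfrac{\lambda_{t+1}-\lambda_t}{2}\dist_{\DD}^2$ and as monotonicity of the optimal values to switch the comparator from $\bx_t^*$ to $\bx_{t+1}^*$. You simply spell out the ``standard CG argument'' that the paper cites in compressed form; no gap.
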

\begin{proof}
Let us begin by 
observing that $F_{\lambda_t}$ is convex and
$L_f+\lambda_t$-smooth (cf. Fact~\ref{f:gsmooth}).
Since Algorithm~\ref{alg:scg} performs one step of
the vanilla CG algorithm to \eqref{e:49}, a standard CG argument
\cite{CGsurvey} (relying on smoothness \eqref{e:2},
Line~\ref{a:lmo} and Fact~\ref{f:22}, then convexity \eqref{e:1})
shows
\begin{align}
F_{\lambda_{t}}(\bx_{t+1})-F_{\lambda_{t}}(\bx_{t})
\label{e:4213}
&\leq\gamma_t\Big(F_{\lambda_t}(\bx_t^*)-F_{\lambda_t}(\bx_t)\Big)
+\gamma_t^2\frac{L_f+\lambda_t}{2}\sum_{i\in I}\omega_iR_i^2.
\end{align}
Using Lemma~\ref{l:Frel}, then adding
$F_{\lambda_t}(\bx_t)-F_{\lambda_t}(\bx_t^*)$ to both sides of
\eqref{e:4213} reveals
\begin{align}
H_{t+1}&\leq
F_{\lambda_{t+1}}(\bx_{t+1})-F_{\lambda_{t}}(\bx_t^*)\\
&=F_{\lambda_{t}}(\bx_{t+1})-F_{\lambda_{t}}(\bx_t^*)
+\frac{\lambda_{t+1}-\lambda_t}{2}\dist_{\DD}^2(\bx_{t+1})\\
&\leq(1-\gamma_t)
H_{t}
+\frac{\lambda_{t+1}-\lambda_t}{2}\dist_{\DD}^2(\bx_{t+1})
+\gamma_t^2\frac{L_f+\lambda_t}{2}\sum_{i\in I}\omega_iR_i^2.
\end{align}
Finally, Lemma~\ref{l:gbound} finishes the result.
\end{proof}

\begin{theorem}
\label{t:convex}
In the setting of Lemma~\ref{l:rec}, for every $t\geq 0$ set
$\gamma_t=2/(\sqrt{t}+2)$. Let $\lambda_0>0$ and for every $t\geq
1$ set $\lambda_{t+1}=\lambda_t+\lambda_0(\sqrt{t}+2)^{-2}$.
Then, for every $t\in\NN$, the iterates of Algorithm~\ref{alg:scg}
satisfy
\begin{equation}
\label{e:rate}
0\leq
H_t\leq
2\sum_{i\in I}\omega_iR_i^2\left(
\frac{\lambda_0(2\ln(\sqrt{t}+2)+\frac{1}{4})+L_f}{\sqrt{t}+2}
+\frac{4\lambda_0}{(\sqrt{t}+2)^2}
\right).
\end{equation}
In particular, $F_{\lambda_t}(\bx_t)\to\inf_{x\in\CCN}f(x)$ and
$\dist_{\DD}(\bx_t)\to 0$. Furthermore, every accumulation point
$\bx_{\infty}$ of $(\bx_t)_{t\in\NN}$ produces a solution
$A\bx_{\infty}\in\CCN$ such that
$f(A\bx_{\infty})=\inf_{x\in\CCN}f(x)$.
\end{theorem}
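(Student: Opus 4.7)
The plan is to obtain the explicit rate bound \eqref{e:rate} by inducting on $t$, using the one-step recursion from Lemma~\ref{l:rec} under the prescribed schedules $\gamma_t = 2/(\sqrt{t}+2)$ and $\lambda_{t+1}-\lambda_t = \lambda_0/(\sqrt{t}+2)^2$. Once the rate is in hand, the convergence statements follow by combining $H_t\to 0$ with Proposition~\ref{p:fconv} (for function-value convergence), with the growth $\lambda_t\to+\infty$ (for vanishing distance to $\DD$), and with compactness of $\CCX$ together with continuity of $f\circ A$ (for the accumulation-point statement).

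For the rate itself, write $S := \sum_{i\in I}\omega_i R_i^2$ and $\phi(t) := \sqrt{t}+2$. With $\gamma_t = 2/\phi(t)$, the recursion from Lemma~\ref{l:rec} becomes
\begin{equation*}
H_{t+1}\leq \frac{\sqrt{t}}{\phi(t)}\,H_t+\frac{\lambda_0 S}{2\phi(t)^2}+\frac{2S(L_f+\lambda_t)}{\phi(t)^2}.
\end{equation*}
Since $\lambda_t = \lambda_0\bigl(1+\sum_{s=0}^{t-1}\phi(s)^{-2}\bigr)$, the integral test with the primitive $\int(\sqrt{x}+2)^{-2}\,dx = 2\ln(\sqrt{x}+2)+4/(\sqrt{x}+2)+C$ (substitute $u=\sqrt{x}+2$) yields an explicit logarithmic bound $\lambda_t\leq \lambda_0\bigl(2\ln\phi(t)+c_0\bigr)$ for a small constant $c_0$. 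I then proceed by induction with the candidate inequality
\begin{equation*}
H_t\leq 2S\left(\frac{\lambda_0\bigl(2\ln\phi(t)+\tfrac{1}{4}\bigr)+L_f}{\phi(t)}+\frac{4\lambda_0}{\phi(t)^2}\right),
\end{equation*}
checking the base case by hand and, for the inductive step, absorbing each of the three terms on the right-hand side of the recursion into the candidate bound at index $t+1$ via the contraction factor $1-2/\phi(t)$. The principal obstacle is exactly this bookkeeping: one must verify that the logarithmic tail $\frac{\sqrt{t}}{\phi(t)}\cdot\frac{\ln\phi(t)}{\phi(t)}$, together with the $1/\phi(t)^2$ remainders coming from both the penalty increment and the smoothness term, is majorized by $\frac{\ln\phi(t+1)}{\phi(t+1)}$ up to the explicit $1/4$ and $4$ slack terms built into the candidate bound. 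This is an elementary but delicate calculus check on $\phi$ and $\ln\phi$.

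Granting the rate, $H_t\to 0$; combined with Proposition~\ref{p:fconv} (which gives $\inf_{\bx\in\CCX}F_{\lambda_t}\to\inf_{x\in\CCN}f(x) =: f^*$), this yields $F_{\lambda_t}(\bx_t)\to f^*$. To see $\dist_{\DD}(\bx_t)\to 0$, note that $A\bx_t\in\CCM$, which is compact, so $f(A\bx_t)$ is bounded below by some $m\in\RR$; rearranging
\begin{equation*}
0\leq\dist_{\DD}^2(\bx_t) = \frac{2\bigl(F_{\lambda_t}(\bx_t)-f(A\bx_t)\bigr)}{\lambda_t}\leq\frac{2\bigl(F_{\lambda_t}(\bx_t)-m\bigr)}{\lambda_t}
\end{equation*}
and using $\lambda_t\to+\infty$ with the numerator bounded delivers the desired vanishing.

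Finally, for an accumulation point $\bx_\infty$ of $(\bx_t)_{t\in\NN}$ (which exists by compactness of $\CCX$), continuity of $\dist_{\DD}$ along the relevant subsequence forces $\dist_{\DD}(\bx_\infty)=0$, so $\bx_\infty\in\DD\cap\CCX$, and Proposition~\ref{prop:decomp} delivers $A\bx_\infty\in\CCN$. Along the same subsequence, continuity of $f\circ A$ gives $f(A\bx_{t_k})\to f(A\bx_\infty)$; combined with $F_{\lambda_{t_k}}(\bx_{t_k})\to f^*$ and nonnegativity of the penalty term, this forces $f(A\bx_\infty)\leq f^*$, while the reverse inequality is immediate from $A\bx_\infty\in\CCN$.
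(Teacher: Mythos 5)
Your proposal follows essentially the same route as the paper: the same induction on the recursion of Lemma~\ref{l:rec} with contraction factor $\sqrt{t}/(\sqrt{t}+2)$, the same logarithmic bound on $\lambda_t$ via the primitive $2\ln(\sqrt{x}+2)+4/(\sqrt{x}+2)$ (the paper's $\xi$), and the same endgame combining Proposition~\ref{p:fconv}, boundedness of $f\circ A$ on the compact set $\CCX$, and compactness for the accumulation-point claim. The only difference is that you sketch rather than execute the inductive bookkeeping, but you correctly isolate the needed inequality (essentially $(\sqrt{t}+1)(\sqrt{t+1}+2)\leq(\sqrt{t}+2)^2$ together with monotonicity of $\xi$), which is exactly how the paper closes the step.
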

\begin{proof}
For notational convenience, set $R=\sum_{i\in I}\omega_iR_i^2$
and
$\xi\colon\RR\to\RR\colon
s\mapsto2\ln(\sqrt{s}+2)+4/(\sqrt{s}+2)$.
By calculus, for every $t\in\NN$ such that $t\geq 1$,
$\lambda_t-\lambda_0\leq\lambda_0\xi(t)-\lambda_0\xi(0)$, so
$\lambda_t\leq\lambda_0\xi(t)$.
We shall proceed by induction. The base case for $t=0$ follows
from \eqref{e:2}, \eqref{e:opt}, and Lemma~\ref{l:gbound}. 
Next, we suppose that \eqref{e:rate} holds for $t\in\NN$.
Our inductive hypothesis, bound on $\lambda_t$, and \eqref{e:rec} 
yield
\begin{align}
H_{t+1}&\leq
(1-\gamma_t)\left(
2R\frac{\lambda_0\xi(t)+L_f+\frac{\lambda_0}{4}}{\sqrt{t}+2}
\right)
+\frac{\lambda_{t+1}-\lambda_t}{2}R
+\gamma_t^2\frac{(L_f+\lambda_0\xi(t))R}{2}\\
&=\frac{\sqrt{t}}{\sqrt{t}+2}\left(
2R\frac{\lambda_0\xi(t)+L_f+\frac{\lambda_0}{4}}{\sqrt{t}+2}
\right)
+2R\left(\frac{L_f+\frac{\lambda_0}{4}}{(\sqrt{t}+2)^2}
+\frac{\lambda_0\xi(t)
}{(\sqrt{t}+2)^2}\right)\\
\label{e:penultimate}
&\leq\frac{\sqrt{t}+1}{(\sqrt{t}+2)^2}
\big(
2R(L_f+\frac{\lambda_0}{4}+\lambda_0\xi(t+1))
\big)\\
\label{e:f12}
&\leq \frac{1}{\sqrt{t+1}+2}
\left(
2R(L_f+\frac{\lambda_0}{4}+\lambda_0\xi(t+1))
\right),
\end{align}
where \eqref{e:penultimate} is because $\xi$ is increasing 
and \eqref{e:f12} is because and
$(\sqrt{t}+1)(\sqrt{t+1}+2)\leq(\sqrt{t}+2)^2$. Having shown
\eqref{e:rate}, we point out that Proposition~\ref{p:fconv} implies
$\lim_{t\to\infty}F_{\lambda_t}(\bx_t^*)=\inf_{x\in\CCN}f(x)$. 
Hence $\lim_{t\to\infty}F_{\lambda_t}(\bx_t)$ exists and, via
\eqref{e:rate}, is equal to $\inf_{x\in\CCN}f(x)$. Since
$\lambda_t\to\infty$, it must be that $\dist_{\DD}^2(\bx_t)\to 0$.
Therefore, every accumulation point $x_{\infty}\in\CCX$ must also
reside in $\DD$, so $A\bx_{\infty}\in\CCN$. Passing to a
subsequence, since $f$ is continuous we have
\begin{equation}
\inf_{x\in\CCN}f(x)\leq f(A\bx_{\infty})=
\lim_{k\to\infty}f(A\bx_{t_k})\leq
\lim_{k\to\infty}F_{\lambda_{t_k}}(\bx_{t_k})
=\inf_{x\in\CCN}f(x).
\end{equation}
%
\end{proof}

Note that, although Theorem~\ref{t:convex} shows convergence of the
primal gaps of the subproblem \eqref{e:49}, these gaps are never
actually computed in practice, since $\bx_{t}^*$ is inaccessible.
We also point out that, for the choice of $\lambda_0=L_f$, our
convergence rate becomes scale-invariant.

The convergence rate in Theorem~\ref{t:convex} is atypical of CG
algorithms with convex objective functions, because they usually
have an $\mathcal{O}(1/t)$ convergence rate. 
This was achieved in the split-LMO setting under the condition
$m=2$ in \cite{Brau22,MuZh16} and with a Slater-type condition in
\cite{Gide18} by choosing stepsizes of magnitude
$\gamma_t=\mathcal{O}(1/t)$. However, in order to achieve
convergence in the proof of Theorem~\ref{t:convex} with this larger
stepsize, this would necessitate that
$\lambda_{t+1}-\lambda_t\leq\mathcal{O}(1/t^2)$, i.e.,
$\lambda_t\not\to\infty$. Since Example~\ref{ex:linf} establishes
that $\lambda_t\to\infty$ can be necessary (supported also by
Proposition~\ref{p:conv}), we would no longer be able to show that
the sequence of relaxed subproblems \eqref{e:49} converges to the
original splitting problem \eqref{e:p}. So, using a faster stepsize
schedule would still yield a convergent algorithm, but it
would not necessarily solve \eqref{e:p}. We shall consider the
topic of achieving a faster rate with extra assumptions in future
work.

\begin{remark}
\label{r:iterates}
Without additional assumptions, Algorithm~\ref{alg:scg} does not
guarantee iterate convergence of $(x_t)_{t\in\NN}$, which is
consistent with other CG methods \cite{Bolt22}. If, for instance,
$f$ is also
$\mu$-strongly convex, then Theorem~\ref{t:convex} can be
strengthened to provide convergence of the averages, because
$A\bx_t^*$ converges to the unique
solution $x^*$ of \eqref{e:p} and
$0\leq\mu\|A\bx_t-A\bx_t^*\|/2\leq
F_{\lambda_t}(\bx_t)-F_{\lambda_t}(\bx_t^*)\to 0$, so
$A\bx_t\to x^*$ as well.
\end{remark}

\subsection{Nonconvex setting}
\label{sec:ncv}

For CG methods which address \eqref{e:OG} in the case when $f$ is
nonconvex, it is standard to show that the Frank-Wolfe
gap at $x\in\HH$, $G_{f,C}(x):=\sup_{v\in C}\scal{\nabla f(x)}{x-v}$,
converges to zero, because $f$ is stationary at $x\in C$ whenever
the \FW gap vanishes \eqref{e:opt} \cite{CGsurvey}.
Since \FW gaps are highly variable between iterations, convergence
rates are typically derived for the average of \FW gaps. In this
section, we consider the \FW gaps for our subproblems \eqref{e:49}
which converge to \eqref{e:p} (in the sense of
Proposition~\ref{p:conv}).

We begin by connecting the \FW gaps of our
subproblems \eqref{e:49} to that of the original problem \eqref{e:p}.
In particular, for every $\lambda\geq 0$ the
Frank-Wolfe gaps of our subproblems at $\bx\in\CCX$ provide an
upper bound to {\em both} the penalty $\lambda\dist_{\DD}^2(\bx)$
and the \FW gap of the original problem \eqref{e:p} at $A\bx$.
Although
$G_{F_{\lambda},\CCX}(\bx_t)\geq 0$ is guaranteed, it is
interesting to note that the \FW gap for
the splitting problem \eqref{e:p}, namely $G_{f,\CCN}(A\bx_t)$, may
actually be negative since $A\bx_t$ is not guaranteed to reside in
$\CCN$ after a finite number of iterations.

\begin{lemma}
\label{l:gapbound}
Let $f$ be smooth, set $\beta_f=\sup_{\bx\in\CCX}\|\nabla f(A\bx)\|$,
let $\DD\subset\HHH$ denote the diagonal subspace of $\HHH$, let
$(C_i)_{i\in I}$ be a finite collection of nonempty compact convex
subsets of $\HH$ with diameters $\{R_i\}_{i\in
I}\subset\left[0,+\infty\right[$ such that
$\CCN\neq\varnothing$, and for every $\lambda\geq 0$, set
$F_{\lambda}\colon\HHH\to\left]-\infty,+\infty\right]\colon\bx\mapsto
f(A\bx)+\lambda\dist_{\DD}^2(\bx)/2$. Then, for every $\bx\in\CCX$,
\begin{equation}
\begin{aligned}
\label{e:gaps}
\sup_{\bv\in\bigtimes_{i\in I}C_i}\scal{\nabla
F_{\lambda}(\bx)}{\bx-\bv}& \geq\sup_{v\in\bigcap_{i\in I}C_i}
\scal{\nabla f(A\bx)}{A\bx-v}+\lambda \dist_{\DD}^2(\bx)\\
&\geq
-\beta_f\sum_{i\in I}\omega_iR_i.
\end{aligned}
\end{equation}
\end{lemma}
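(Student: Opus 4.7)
The plan is to restrict the supremum on the left-hand side to a smaller parameterized family of feasible vectors and compute the resulting expression exactly. By the chain rule, Fact~\ref{f:gsmooth}, and the identity $\proj_{\DD}=A^*A$ from \eqref{e:props}, the gradient is $\nabla F_{\lambda}(\bx)=A^*\nabla f(A\bx)+\lambda(\bx-\proj_{\DD}\bx)$. For any $v\in\CCN$, Proposition~\ref{prop:decomp} guarantees that $\bv:=(v,\ldots,v)=A^*v$ lies in $\DD\cap\CCX$, so $\bv$ is admissible in the left-hand supremum.

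For this choice of $\bv$, I split $\scal{\nabla F_{\lambda}(\bx)}{\bx-\bv}$ into two pieces. The smooth part simplifies via the adjoint relation $AA^*=\Id$ (which follows from $\sum_i\omega_i=1$):
\begin{equation*}
\scal{A^*\nabla f(A\bx)}{\bx-\bv}=\scal{\nabla f(A\bx)}{A\bx-AA^*v}=\scal{\nabla f(A\bx)}{A\bx-v}.
\end{equation*}
The penalty part exploits orthogonality: since $\bx-\proj_{\DD}\bx\in\DD^\perp$ while $\bv$ and $\proj_{\DD}\bx$ both lie in $\DD$, the cross term $\scal{\bx-\proj_{\DD}\bx}{\bv}$ vanishes, yielding $\scal{\bx-\proj_{\DD}\bx}{\bx-\bv}=\|\bx-\proj_{\DD}\bx\|^2=\dist_{\DD}^2(\bx)$. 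Summing, $\scal{\nabla F_{\lambda}(\bx)}{\bx-\bv}=\scal{\nabla f(A\bx)}{A\bx-v}+\lambda\dist_{\DD}^2(\bx)$. Taking the supremum over $v\in\CCN$ on both sides gives the first inequality in \eqref{e:gaps}.

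For the second inequality, note $\lambda\dist_{\DD}^2(\bx)\geq 0$, so it suffices to lower bound the \FW gap for the original problem. Pick any $v_0\in\CCN$ (nonempty by hypothesis); then Cauchy--Schwarz gives $\scal{\nabla f(A\bx)}{A\bx-v_0}\geq -\beta_f\|A\bx-v_0\|$. Writing $A\bx-v_0=\sum_i\omega_i(\bx^i-v_0)$, the triangle inequality combined with $\bx^i,v_0\in C_i\Rightarrow\|\bx^i-v_0\|\leq R_i$ yields $\|A\bx-v_0\|\leq\sum_i\omega_iR_i$, closing the argument.

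The calculation is essentially mechanical; the only real insight is recognizing that restricting $\bv$ to the image $A^*(\CCN)\subset\DD\cap\CCX$ accomplishes two things simultaneously --- it matches the linear term to the \FW gap of $f$ over $\CCN$, and, because $\bv\in\DD$, it makes the penalty inner product collapse to $\dist_{\DD}^2(\bx)$ via the orthogonal projection onto $\DD$. Neither smoothness of $f$ nor compactness is needed for the first inequality (only nonemptiness of $\CCN$); compactness enters only through the diameter bound for the second inequality.
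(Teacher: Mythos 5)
Your proof is correct and follows essentially the same route as the paper: restrict the supremum to diagonal points $A^*v$ with $v\in\CCN$ (via Proposition~\ref{prop:decomp}), use adjointness of $A$ and orthogonality of $\bx-\proj_{\DD}\bx$ to $\DD$ to collapse the penalty term to $\lambda\dist_{\DD}^2(\bx)$, and then bound the resulting gap by Cauchy--Schwarz and the diameters. The only cosmetic difference is that the paper instantiates $v=\proj_{\CCN}(A\bx)$ and bounds $\dist_{\CCN}(A\bx)$, whereas you use an arbitrary $v_0\in\CCN$, which yields the same estimate.
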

\begin{proof}
First, by infimizing over a subset of $\CCX$, we find
\begin{align}
\inf_{\bv\in\bigtimes_{i\in I}C_i}\scal{\nabla
F_{\lambda}(\bx)}{\bv-\bx}
&=\inf_{\bv\in\bigtimes_{i\in I}C_i}
\scal{A^*\nabla f(A\bx)+\lambda(\bx-A^*A\bx)}{\bv-\bx}\\
&
\begin{multlined}
\leq\inf_{\bv\in\DD\cap\CCX}
\scal{\nabla
f(A\bx)}{A\bv-A\bx}+\\
\lambda\scal{\bx-A^*A\bx}{\bv-\bx}.
\end{multlined}
\end{align}
Since $\bx-A^*A\bx\in\DD^{\perp}$ and $A^*A\bx\in\DD$, we have the
following identity for every
$\bv\in\DD$
\begin{equation}
\scal{\bx-A^*A\bx}{\bv-\bx}=\scal{\bx-A^*A\bx}{-A^*A\bx-(\bx-A^*A\bx)}
=-\|\bx-A^*A\bx\|^2.
\end{equation}
So, using Proposition~\ref{prop:decomp} for a change of variables, 
we set $p=\proj_{\CCN}(A\bx)$ to find that
\begin{align}
\label{e:93}
\inf_{\bv\in\bigtimes_{i\in I}C_i}\scal{\nabla
F_{\lambda}(\bx)}{\bv-\bx}
&\leq\inf_{v\in\bigcap_{i\in I}C_i}
\scal{\nabla f(A\bx)}{v-A\bx}-\lambda\dist_{\DD}^2(\bx)\\
&\leq \scal{\nabla f(A\bx)}{p
-A\bx
}
-\lambda\dist_{\DD}^2(\bx)\\
&\leq \beta_f\dist_{\bigcap_{i\in I}C_i}(A\bx)-\lambda
\dist_{\DD}^2(\bx)\\
&\leq\beta_f\sum_{i\in I}\omega_iR_i,
\end{align}
since $\dist_{\bigcap_{i\in I}C_i}(A\bx)
=\|\sum_{i\in I}\omega_i(\bx^i-p)\|\leq\sum_{i\in
I}\omega_iR_i$.
Finally, negation yields \eqref{e:gaps}.
\end{proof}

With these results in-hand, we can now prove our main result.

\begin{theorem}
\label{t:nconv}
Let $f$ be $L_f$-smooth, let $\DD\subset\HHH$ denote the
diagonal subspace of $\HHH$, let $(C_i)_{i\in I}$ be a finite
collection of nonempty compact convex subsets of $\HH$ with
diameters $\{R_i\}_{i\in I}\subset\left[0,+\infty\right[$ such that
$\CCN\neq\varnothing$, and for every $\lambda\geq 0$, set
$F_{\lambda}\colon\HHH\to\left]-\infty,+\infty\right]\colon\bx\mapsto
f(A\bx)+\lambda\dist_{\DD}^2(\bx)/2$. 
Set $\gamma_t=1/\sqrt{t+1}$, let $\lambda_0>0$, and for every $t\geq
1$, set $\lambda_t=\lambda_0\sum_{k=0}^{t-1}1/(k+1)$. Then, for every
$t\geq 1$, the iterates of Algorithm~\ref{alg:scg}
satisfy\footnote{Precise constants for \eqref{e:ncvrate} are in
\eqref{e:ncrate}.}
\begin{equation}
\label{e:ncvrate}
0\leq\frac{1}{t}\sum_{k=0}^{t-1}
\sup_{\bv\in\CCX}
\big\langle{\nabla
F_{\lambda_k}(\bx_k)}\;\big|\:{\bx_k-\bv}\big\rangle
\leq
\mathcal{O}\left(\frac{\ln t }{\sqrt{t}}+\frac{1}{\sqrt{t}}\right).
\end{equation}
In particular, there exists a subsequence $(t_k)_{k\in\NN}$ such
that 
\begin{equation}
\left(\sup_{\bv\in\CCX}\scal{\nabla
F_{\lambda_{t_k}}(\bx_{t_k})}{\bx_{t_k}-\bv}\right)_{k\in\NN}\to
0.\end{equation}
Furthermore, every accumulation point $\bx_{\infty}$ of
$(\bx_{t_k})_{k\in\NN}$ yields a point in the intersection via
$A\bx_{\infty}\in\CCN$ which satisfies $G_{f,\CCN}(A\bx_t)=0$,
i.e., $A\bx_{\infty}$ is a stationary point of the problem \eqref{e:p}.
\end{theorem}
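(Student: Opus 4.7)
The plan is to combine a per-iteration CG descent inequality for $F_{\lambda_t}$ with Lemma~\ref{l:Frel} into a telescoping bound on $\sum_t\gamma_tG_t$, pass to the uniform Ces\`aro average via $\gamma_t\geq 1/\sqrt{T}$, and then use Lemma~\ref{l:gapbound} together with $\lambda_t\to+\infty$ to upgrade any subsequence with vanishing $G_t$ into a feasible stationary point of \eqref{e:p}.

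First, since $F_{\lambda_t}$ is $(L_f+\lambda_t)$-smooth (Fact~\ref{f:gsmooth}) and Algorithm~\ref{alg:scg} performs exactly one vanilla CG step on it over $\CCX$ (Fact~\ref{f:22}), the standard CG descent inequality combined with the diameter bound from Lemma~\ref{l:gbound} gives
\begin{equation*}
F_{\lambda_t}(\bx_{t+1})-F_{\lambda_t}(\bx_t)\leq-\gamma_tG_t+\gamma_t^2\tfrac{(L_f+\lambda_t)R}{2},
\end{equation*}
where $G_t:=\sup_{\bv\in\CCX}\scal{\nabla F_{\lambda_t}(\bx_t)}{\bx_t-\bv}\geq 0$ (nonnegativity by feasibility $\bx_t\in\CCX$) and $R:=\sum_{i\in I}\omega_iR_i^2$. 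Invoking Lemma~\ref{l:Frel} to pass from $F_{\lambda_t}(\bx_{t+1})$ to $F_{\lambda_{t+1}}(\bx_{t+1})$, and using Lemma~\ref{l:gbound} to bound $\dist_{\DD}^2(\bx_{t+1})\leq R$, yields
\begin{equation*}
\gamma_tG_t\leq F_{\lambda_t}(\bx_t)-F_{\lambda_{t+1}}(\bx_{t+1})+\tfrac{(\lambda_{t+1}-\lambda_t)R}{2}+\gamma_t^2\tfrac{(L_f+\lambda_t)R}{2}.
\end{equation*}

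Second, summing from $t=0$ to $T-1$ telescopes the $F$ terms, and $-F_{\lambda_T}(\bx_T)$ is bounded above since $f\circ A$ attains its minimum on the compact set $\CCX$ and $\dist_{\DD}^2\geq 0$. With the prescribed $\gamma_t=1/\sqrt{t+1}$ and $\lambda_t=\lambda_0\sum_{k=0}^{t-1}1/(k+1)$, harmonic-sum estimates control the noise sums $\sum(\lambda_{t+1}-\lambda_t)$ and $\sum\gamma_t^2(L_f+\lambda_t)$ (precise constants are recorded in \eqref{e:ncrate}). Since $\gamma_t\geq 1/\sqrt{T}$ for $0\leq t\leq T-1$ and $G_t\geq 0$,
\begin{equation*}
\frac{1}{T}\sum_{t=0}^{T-1}G_t\leq\frac{1}{\sqrt{T}}\sum_{t=0}^{T-1}\gamma_tG_t=\mathcal{O}\!\left(\frac{\ln T}{\sqrt{T}}\right),
\end{equation*}
which delivers \eqref{e:ncvrate}.

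Third, the vanishing of the Ces\`aro average of the nonnegative $G_t$'s forces a subsequence $(t_k)$ with $G_{t_k}\to 0$. Lemma~\ref{l:gapbound} gives $\lambda_t\dist_{\DD}^2(\bx_t)\leq G_t+\beta_f\sum_{i\in I}\omega_iR_i$, so $\lambda_{t_k}\dist_{\DD}^2(\bx_{t_k})$ is bounded; since $\lambda_{t_k}\to+\infty$, necessarily $\dist_{\DD}^2(\bx_{t_k})\to 0$. By compactness of $\CCX$, any cluster point $\bx_\infty$ of $(\bx_{t_k})$ lies in $\DD\cap\CCX$ by continuity of $\dist_{\DD}$, so Proposition~\ref{prop:decomp} yields $A\bx_\infty\in\CCN$. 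A second application of Lemma~\ref{l:gapbound} gives $G_{f,\CCN}(A\bx_{t_k})\leq G_{t_k}\to 0$, and continuity of $x\mapsto G_{f,\CCN}(x)$ (Berge's maximum theorem applied to the compact $\CCN$ with continuous $\nabla f$) together with $G_{f,\CCN}(A\bx_\infty)\geq 0$ force $G_{f,\CCN}(A\bx_\infty)=0$; stationarity follows from \eqref{e:opt}.

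The main obstacle is the joint calibration of $(\gamma_t,\lambda_t)$: Example~\ref{ex:linf} and Proposition~\ref{p:conv} show that $\lambda_t\to+\infty$ is essential to enforce feasibility at the limit, yet the noise sums $\sum\gamma_t^2\lambda_t$ and $\sum(\lambda_{t+1}-\lambda_t)$ must grow only polylogarithmically so that, after dividing by $\sqrt{T}$, the target $\mathcal{O}(\ln T/\sqrt{T})$-order decay of the averaged FW gap survives. The schedule $\lambda_t=\Theta(\ln t)$, $\gamma_t=\Theta(1/\sqrt{t})$ is tuned precisely to meet both demands.
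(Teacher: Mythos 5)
Your overall skeleton -- one-step CG descent for $F_{\lambda_t}$, Lemma~\ref{l:Frel} to shift the penalty index, Lemma~\ref{l:gbound} for the diameter bounds, then Lemma~\ref{l:gapbound} plus $\lambda_{t_k}\to+\infty$ to upgrade a vanishing-gap subsequence into a feasible stationary accumulation point -- matches the paper, and your third part is sound. The gap is quantitative and sits in your summation step. After telescoping, your own inequality gives
$\sum_{t=0}^{T-1}\gamma_t G_t\le F_{\lambda_0}(\bx_0)-\inf_{\bx\in\CCX}f(A\bx)+\tfrac{R}{2}(\lambda_T-\lambda_0)+\tfrac{R}{2}\sum_{t=0}^{T-1}\gamma_t^2(L_f+\lambda_t)$,
and under the prescribed schedules ($\gamma_t^2=1/(t+1)$, $\lambda_t\approx\lambda_0\ln t$) the last sum behaves like $\lambda_0\sum_{t}\tfrac{\ln t}{t+1}=\Theta(\ln^2 T)$, not $\mathcal{O}(\ln T)$. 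Consequently your chain $\tfrac1T\sum_t G_t\le\tfrac{1}{\sqrt T}\sum_t\gamma_t G_t$ only yields $\mathcal{O}(\ln^2 T/\sqrt T)$; the asserted equality $\tfrac{1}{\sqrt T}\sum_t\gamma_tG_t=\mathcal{O}(\ln T/\sqrt T)$ is unjustified, so the rate \eqref{e:ncvrate} is not established by your argument (the average still vanishes, so your subsequence, feasibility, and stationarity conclusions survive, but with a rate that is a log factor worse than claimed).

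The paper avoids this loss at exactly the point where you diverge: it divides the per-iteration inequality by $\gamma_t$ \emph{before} summing, so the smoothness noise enters as $\sum_k\gamma_k(L_f+\lambda_k)=\mathcal{O}(\sqrt t\,\ln t)$ rather than as $\sum_k\gamma_k^2\lambda_k=\Theta(\ln^2 t)$, and the $\lambda$-increment term becomes $\sum_k\gamma_k^{-1}(\lambda_{k+1}-\lambda_k)=\mathcal{O}(\sqrt t)$; dividing by $t$ then gives $\mathcal{O}(\ln t/\sqrt t)$. The price is that $\sum_k\gamma_k^{-1}\bigl(F_{\lambda_k}(\bx_k)-F_{\lambda_{k+1}}(\bx_{k+1})\bigr)$ no longer telescopes, and this is where the paper needs an extra ingredient absent from your proposal: it passes to the subproblem primal gaps $H_k=F_{\lambda_k}(\bx_k)-F_{\lambda_k}(\bx_k^*)$ (Lemma~\ref{l:Frel} gives $F_{\lambda_k}(\bx_k)-F_{\lambda_{k+1}}(\bx_{k+1})\le H_k-H_{k+1}$), applies Abel summation, and bounds $H_k\le\beta_f R_A+\lambda_k\beta_p\sqrt R=\mathcal{O}(\ln k)$ via Lipschitz estimates on $f\circ A$ and $\dist_{\DD}^2/2$, so that $\sum_{k}\bigl(\gamma_k^{-1}-\gamma_{k-1}^{-1}\bigr)H_k=\mathcal{O}(\sqrt t\,\ln t)$. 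Either adopt that route or supply some other refinement of your Ces\`aro conversion; as written, the claimed $\mathcal{O}(\ln t/\sqrt t)$ bound does not follow.
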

\begin{proof}
For notational convenience, set $R=\sum_{i\in
I}\omega_iR_i^2$, $R_{A}=\sum_{i\in I}\omega_iR_i$, and 
$B=\max\{\beta_p\sqrt{R},R\}$; for every $t\in\NN$, 
let $\bx_t^*$ be a minimizer of $F_{\lambda_t}$ over $\CCX$, 
set $H_{t}=F_{\lambda_t}(\bx_t)-F_{\lambda_t}(\bx_t^*)$, and set
$\bv_t=(\bv_{t}^{i})_{i\in I}\in\CCX$ (cf. Line~\ref{a:lmo}).
Let us recall
that $F_{\lambda_t}$ is $(L_f+\lambda_t)$-smooth.
By the optimality of $\bv_t$ (Fact~\ref{f:22} and
Line~\ref{a:lmo}), construction in Line~\ref{a:new},
and the smoothness inequality \eqref{e:2}, we have
\begin{equation}
0\leq \gamma_t\scal{\nabla F_{\lambda_t}(\bx_t)}{\bx_t-\bv_t}\leq
F_{\lambda_t}(\bx_t)-F_{\lambda_t}(\bx_{t+1})+\gamma_t^2\frac{L_f+\lambda_t}{2}\|\bv_t-\bx_t\|^2.
\end{equation}
So, using Lemma~\ref{l:Frel} and Lemma~\ref{l:gbound} twice,
\begin{align}
0&\leq\scal{\nabla F_{\lambda_t}(\bx_t)}{\bx_t-\bv_t}\\
&\leq
\frac{F_{\lambda_t}(\bx_t)-F_{\lambda_{t+1}}(\bx_{t+1})}{\gamma_t}
+\frac{\lambda_{t+1} -\lambda_t}{\gamma_t}\dist_{\DD}^2(\bx_{t+1})
+\gamma_t\frac{L_f+\lambda_t}{2}R
\\
&\leq
\frac{F_{\lambda_t}(\bx_t)-F_{\lambda_{t+1}}(\bx_{t+1})}{\gamma_t}
+\frac{\lambda_{t+1} -\lambda_t}{\gamma_t}R
+\gamma_t\frac{L_f+\lambda_t}{2}R.
\end{align}
Furthermore, since $f$ and
$\dist_{\DD}^2/2$ are
smooth and $\sum_{i\in I}\omega_i C_i$ and $\CCX$ are
compact, it follows that their gradients are bounded. Hence, $f$
and $\dist_{\DD}^2/2$ are Lipschitz continuous on these sets, with 
constants $\beta_f:=\sup_{c\in\CCM}\|\nabla f(c)\|$ and
$\beta_p:=\sup_{\bc\in\bigtimes_{i\in I}C_i}\|\nabla
\dist_{\DD}^2(\bc)/2\|$ respectively. 
Therefore, 
we find that by Jensen's inequality and Lemma~\ref{l:gbound},
\begin{align}
H_{t}&\leq\beta_f\|A\bx_t-A\bx_t^*\|+
\lambda_t\beta_p\|\bx_t-\bx_t^*\|\\
&\leq
\beta_f\sum_{i\in I}\omega_iR_i+\lambda_t\beta_p\sqrt{\sum_{i\in
I}\omega_iR_i^2}\\
\label{e:lip2}
&=\beta_fR_A+\lambda_t\beta_p\sqrt{R}.
\end{align}
By Lemma~\ref{l:Frel}, we have
$\gamma_t^{-1}(F_{\lambda_{t+1}}(\bx_{t+1}^*)-F_{\lambda_{t}}(\bx_t^*))\geq
0$. Combining all of these facts, we find
\begin{align}
0 &\leq\sum_{k=0}^{t-1}\Scal{\nabla F_{\lambda_k}(\bx_k)}{\bx_k-\bv_k}\\
&\leq\sum_{k=0}^{t-1}
\left(
\frac{F_{\lambda_k}(\bx_k)-F_{\lambda_{k+1}}(\bx_{k+1})}{\gamma_k}
+\frac{\lambda_{k+1}-\lambda_k}{\gamma_k}R
+\gamma_k\frac{L_f+\lambda_k}{2}R\right)\\
\label{e:807}
&\leq\sum_{k=0}^{t-1}
\left(
\frac{H_{k}-H_{k+1}}{\gamma_k}
+\frac{\lambda_{k+1}-\lambda_k}{\gamma_k}R
+\gamma_k\frac{L_f+\lambda_k}{2}R\right)
\end{align}
where we use Lemma~\ref{l:Frel} in \eqref{e:807}. However, the
upper bound in \eqref{e:807} is equal to
\begin{equation}
\label{e:8080}
\frac{H_{0}}{\gamma_0}-\frac{H_{t}}{\gamma_{t-1}}
+\sum_{k=1}^{t-1}\left(\frac{1}{\gamma_k}-\frac{1}{\gamma_{k-1}}
\right)H_{k}
+\sum_{k=0}^{t-1}\left(\frac{\lambda_{k+1} -\lambda_k}{\gamma_k}R
+\gamma_k\frac{L_f+\lambda_k}{2}R
\right).
\end{equation}
Continuing from \eqref{e:807}, we drop a negative term and
use \eqref{e:lip2} in \eqref{e:8080}; we then use the construction
$B$, and simplify to reveal
\begin{align}
\label{e:808}
0 &\leq\sum_{k=0}^{t-1}\Scal{\nabla F_{\lambda_k}(\bx_k)}{\bx_k-\bv_k}\\
&\begin{multlined}\leq
\frac{\beta_fR_A+\lambda_0\beta_p\sqrt{R}}{\gamma_0}
+
\sum_{k=1}^{t-1}
\left(\frac{1}{\gamma_k}-\frac{1}{\gamma_{k-1}}\right)
\left(\beta_fR_A+\lambda_k\beta_p\sqrt{R}\right)
\\
+\sum_{k=0}^{t-1}\left(\frac{\lambda_{k+1}
-\lambda_k}{\gamma_k}R
+\gamma_k\frac{L_f+\lambda_k}{2}R\right)
\end{multlined}\\
\label{e:901}
&\begin{multlined}\leq\frac{
\beta_fR_A+\lambda_0B}{\gamma_0}
+
\sum_{k=1}^{t-1}
\left(\frac{1}{\gamma_k}-\frac{1}{\gamma_{k-1}}\right)
\left(\beta_fR_A+\lambda_kB\right)
+\sum_{k=0}^{t-1}
\frac{\lambda_{k+1}-\lambda_k}{\gamma_k}B
\\
+\sum_{k=0}^{t-1}
\gamma_k\frac{L_f+\lambda_k}{2}R
\end{multlined}\\
\label{e:902}
&=
\frac{\beta_fR_A+\lambda_{t}B}{\gamma_{t-1}}
+\sum_{k=0}^{t-1}
\gamma_k\frac{L_f+\lambda_k}{2}R.
\end{align}
Next, we note that $\sum_{k=0}^{t-1}\gamma_k\leq2\sqrt{t}$ and
$\lambda_{t}\leq\lambda_0(\ln(t+1)+1)$, so 
\begin{align}
0&\leq\frac{1}{t}\sum_{k=0}^{t-1}\Scal{\nabla F_{\lambda_k}(\bx_k)}{\bx_k-\bv_k}\\
&\leq\frac{\beta_fR_A+\lambda_{t}B}{\sqrt{t}}
+\frac{1}{t}\sum_{k=0}^{t-1}\gamma_k\frac{L_f+\lambda_k}{2}R\\
&\leq\frac{\beta_fR_A+\lambda_{t}B}{\sqrt{t}}
+\frac{1}{t}(L_f+\lambda_{t-1})\sum_{k=0}^{t-1}\gamma_k\frac{1}{2}R\\
&\leq\frac{\beta_fR_A+\lambda_0(\ln(t+1)+1)B}{\sqrt{t}}
+\frac{1}{\sqrt{t}}(L_f+\lambda_0(\ln(t)+1))R\\
&\begin{multlined}
\leq\frac{1}{\sqrt{t}}\Big(\beta_f\sum_{i\in I}\omega_iR_i+
(L_f+\lambda_0)\sum_{i\in I}\omega_iR_i^2
+\lambda_0B\Big)\\
+\frac{\ln(t+1)}{\sqrt{t}}\lambda_0\Big(\sum_{i\in
I}\omega_iR_i^2+B\Big),
\end{multlined}
\label{e:ncrate}
\end{align}
which establishes \eqref{e:ncvrate}. Since the Frank-Wolfe gaps
$(\scal{\nabla F_{\lambda_t}(\bx_t)}{\bx_t-\bv_t})_{t\in\NN}$ are
positive and the
sequence of averages goes to zero, the existence of a subsequence
$(t_k)_{k\in\NN}$ such that $\scal{\nabla
F_{\lambda_t}(\bx_{t_k})}{\bx_{t_k}-\bv_{t_k}}\to 0$ follows. 
Lemma~\ref{l:gapbound} implies that 
\begin{equation}
\left(\sup_{v\in\CCN}\scal{\nabla
f(A\bx_{t_k})}{A\bx_{t_k}-v}+
\lambda_{t_k}\dist_{\DD}^2(\bx_{t_k})\right)_{k\in\NN}
\end{equation}
is bounded. So, since $\lambda_{t_k}\to\infty$, we must have
$\dist_{\DD}^2(\bx_{t_k})\to 0$. Therefore, for every accumulation
point
$\bx_{\infty}$ of $(\bx_{t_k})_{k\in\NN}$,
$\bx_{\infty}\in\DD\cap\CCX$, so $A\bx_{\infty}\in\CCN$ and
\begin{equation}
0\leq\sup_{v\in\CCN}
\scal{\nabla f(A\bx_{\infty})}{A\bx_{\infty}-\bv}.
\end{equation}
Finally, we can
bound the gap above using continuity and Lemma~\ref{l:gapbound}:
\begin{align}
\sup_{v\in\CCN}\scal{\nabla f(A\bx_{\infty})}{A\bx_{\infty}-\bv}
&\leq
\limsup_{k\to\infty}\left(
\sup_{v\in\CCN}\scal{\nabla f(A\bx_{t_k})}{A\bx_{t_k}-\bv}\right)\\
&\leq
\limsup_{k\to\infty}\left(
\scal{\nabla
F_{\lambda_{t_k}}(\bx_{t_k})}{\bx_{t_k}-\bv_{t_k}}\right)\\
&= 0.
\end{align}
Since $G_{f,\CCN}(A\bx_{\infty})=0$, we conclude from \eqref{e:opt}
that $A\bx_{\infty}$ is a stationary point.
\end{proof}

\begin{remark}
\label{r:verify}
We emphasize that, for the cost of one extra inner product, 
the Frank-Wolfe gap $\scal{\nabla
F_{\lambda_t}(\bx_t)}{\bx_t-\bv_t}$ can be computed while
Algorithm~\ref{alg:scg} is running. So, checking for
stationarity in the subproblems \eqref{e:49} is tractable in
practice. Also, similarly to the convex-case, the choice of
$\lambda_0=L_f$ makes our convergence rate in \eqref{e:ncrate}
scale-invariant.
\end{remark}

\section{Conclusion and Future Work}

Theorem~\ref{t:nconv} appears to be the first convergence guarantee
for solving \eqref{e:p} in the nonconvex split-LMO setting.
Furthermore, our rate of convergence is only one log factor less
than the rate of CG for one set constraint ($m=1$) \cite{Pedr20}.
While it is unclear if this log factor can be removed for the
nonconvex setting, we believe that the analysis for the convex rate
can be improved since typically the nonconvex average-F-W-gap
rate is quadratically slower than the convex primal gap rate
\cite{Pedr20}. This speed-up has been achieved in some settings
with algorithms which require one LMO call per iteration
\cite{Gide18,MuZh16}, but it appears that the question of whether
or not $\mathcal{O}(1/t)$ convergence is possible in the split-LMO
setting without additional assumptions remains open.

While our analysis shows that Algorithm~\ref{alg:scg} 
asymptotically solves \eqref{e:p} (in the sense of
Theorems~\ref{t:convex} and \ref{t:nconv}), one drawback is that
the {\em rates} of convergence concern the penalized functions
$F_{\lambda_t}$, which contain both the primal function value and
the penalized feasibility term $\lambda_t\dist_{\DD}^2$. This means
that, outside of the case $m=1$, both
primal suboptimality and feasibility are analyzed in one, composite
quantity. An improvement to our analysis would be to translate
convergence rates of the composite penalty into separate
convergence rates for both primal suboptimality and feasibility,
e.g., as was done for a method of sequential averaging
in the recent preprint \cite{Tran23}.

Based on preliminary numerical experiments, there may be
more to discover for Algorithm~\ref{alg:scg}. We have observed that the algorithm
performance (in terms of feasibility and \FW gap) can be highly
dependent on the initial value $\lambda_0$ when using the parameter
schedules
$(\lambda_t,\gamma_t)=(\mathcal{O}(\ln t), \mathcal{O}(1/\sqrt{t}))$
which are proven to work. However, based on rough experimentation,
we are hopeful that it may be possible to use
Algorithm~\ref{alg:scg} with an adaptive strategy for
$(\lambda_t)_{t\in\NN}$ in conjunction with a faster
stepsize $(\gamma_t)_{t\in\NN}$, for instance, 
$\mathcal{O}(1/t)$ or short-step selections similar to
\cite{Pedr20}. In fact, the proofs of Theorems~\ref{t:convex} and
\ref{t:nconv} can easily be extended to
a short-step selection for $\gamma_t$ by minimizing the upper bound
arising from \eqref{e:2}; however, thus-far, we have not been able
to properly analyze a new adaptive scheme for
$(\lambda_t)_{t\in\NN}$. This is a topic of future work. At least
for now, it appears our contribution is predominantly of
theoretical interest.

In addition to the questions above, there are several interesting
theoretical and numerical investigations to be performed.
For instance, one could investigate
Algorithm~\ref{alg:scg} under additional assumptions on the
objective or constraints. For instance, CG algorithms
possess accelerated convergence rates when
the objective function or constraints are strongly convex
\cite{Garb15,Wirt23}; one could extend this analysis to
Algorithm~\ref{alg:scg}.
Arguments similar to that of \cite[Theorem~2.6]{CGsurvey} appear
fruitful for allowing a variant of Algorithm~\ref{alg:scg} which
admits approximate LMO evaluations in the convex setting of
Theorem~\ref{t:convex}, provided the accuracy of computing $\bv_t$
is bounded by $\mathcal{O}(1/\sqrt{t})$; more generally
investigating approximate LMO implementations for this setting
appears to be an open problem. Many projection-based
splitting methods have an advantage of being
block-iterative, i.e., instead of requiring a computation for all
constraints indexed by $I$ (as is required in the for loop in
Algorithm~\ref{alg:scg}, Line~\ref{a:block}) at every iteration
$t$, only a subset $I_t\subset I$ of updates are performed. This
can significantly reduce the computational load per iteration, and
block-iterative projection methods enjoy convergence under very
mild assumptions on the blocks $(I_t)_{t\in\NN}$
\cite{Comb11,Comb21}. It is worth noting that the inner loop of
Algorithm~\ref{alg:scg} can be
parallelized, and a block-iterative capability would further
improve the per-iteration cost. Several 
LMO-based block-iterative algorithms have been proposed for solving
problems like the relaxation \eqref{e:49} \cite{Beck15,Bomz24}, but
extending them to solve \eqref{e:p} remains to be done.

\section*{Acknowledgments}
The work for this article has been supported by MODAL-Synlab, and
took place on the Research Campus MODAL funded by the German
Federal Ministry of Education and Research (BMBF) (fund numbers
05M14ZAM, 05M20ZBM).
This research was also supported by the DFG Cluster of
Excellence MATH+ (EXC-2046/1, project ID 390685689) funded by the
Deutsche Forschungsgemeinschaft (DFG).

We thank the reviewers who have significantly improved
the quality of this article through their thoughtful comments.
We also thank Kamiar Asgari, G\'abor Braun, Mathieu Besan\c con,
Ibrahim Ozaslan, Christophe Roux, Antonio Silveti-Falls, David
Mart\'inez-Rubio, and Elias Wirth for their valuable feedback and
discussions.

\bibliographystyle{siamplain}
\bibliography{scgarxiv4.bib}

\begin{thebibliography}{10}

\bibitem{Livre1}
{\sc H.~H. Bauschke and P.~L. Combettes}, {\em Convex Analysis and Monotone
  Operator Theory in Hilbert Spaces, 2nd ed.}, Springer, 2017.

\bibitem{Beck15}
{\sc A.~Beck, E.~Pauwels, and S.~Sabach}, {\em The cyclic block conditional
  gradient method for convex optimization problems}, SIAM J. Optim., 25 (2015),
  pp.~2024--2049.

\bibitem{Bern09}
{\sc T.~Bernholt, F.~Eisenbrand, and T.~Hofmeister}, {\em Constrained
  {M}inkowski sums: A geometric framework for solving interval problems in
  computational biology efficiently}, Discrete Comput. Geom., 42 (2009),
  pp.~22--36.

\bibitem{Bolt22}
{\sc J.~Bolte, C.~W. Combettes, and E.~Pauwels}, {\em The iterates of the
  {F}rank-{W}olfe algorithm may not converge}, Math. Oper. Res.,  (to appear).

\bibitem{Bomz24}
{\sc I.~Bomze, F.~Rinaldi, and D.~Zeffiro}, {\em Projection free methods on
  product domains}, Comput. Optim. Appl.,  (2024),
  \url{https://doi.org/10.1007/s10589-024-00585-5}.

\bibitem{CGsurvey}
{\sc G.~Braun, A.~Carderera, C.~Combettes, H.~Hassani, A.~Karbasi, A.~Mokhtari,
  and S.~Pokutta}, {\em Conditional gradient methods}, 2022,
  \url{https://arxiv.org/abs/2211.14103}.

\bibitem{Brau22}
{\sc G.~Braun, S.~Pokutta, and R.~Weismantel}, {\em Alternating linear
  minimization: Revisiting von {N}eumann’s alternating projections}, 2022,
  \url{https://arxiv.org/abs/2212.02933}.

\bibitem{Cens15}
{\sc Y.~Censor and A.~Cegielski}, {\em Projection methods: an annotated
  bibliography of books and reviews}, Optimization, 64 (2015), pp.~2343--2358,
  \url{https://doi.org/10.1080/02331934.2014.957701}.

\bibitem{Chry97}
{\sc I.~Chryssoverghi, A.~Bacopoulos, B.~Kokkinis, and J.~Coletsos}, {\em Mixed
  {F}rank--{W}olfe penalty method with applications to nonconvex optimal
  control problems}, J. Optim. Theory Appl., 94 (1997), pp.~311--334.

\bibitem{Comb21LMO}
{\sc C.~W. Combettes and S.~Pokutta}, {\em Complexity of linear minimization
  and projection on some sets}, Oper. Res. Lett., 49 (2021), pp.~565--571.

\bibitem{Comb11}
{\sc P.~L. Combettes and J.-C. Pesquet}, {\em Proximal Splitting Methods in
  Signal Processing}, Springer New York, New York, NY, 2011,
  \url{https://doi.org/10.1007/978-1-4419-9569-8_10}.

\bibitem{Comb21}
{\sc P.~L. Combettes and Z.~C. Woodstock}, {\em Reconstruction of functions
  from prescribed proximal points}, J. Approx. Theory, 268 (2021), p.~105606,
  \url{https://doi.org/https://doi.org/10.1016/j.jat.2021.105606}.

\bibitem{Ding22}
{\sc T.~Ding, D.~Lim, R.~Vidal, and B.~D. Haeffele}, {\em Understanding doubly
  stochastic clustering}, in Proceedings of the 39th International Conference
  on Machine Learning, K.~Chaudhuri, S.~Jegelka, L.~Song, C.~Szepesvari,
  G.~Niu, and S.~Sabato, eds., vol.~162 of Proceedings of Machine Learning
  Research, PMLR, 17--23 Jul 2022, pp.~5153--5165.

\bibitem{Duan19}
{\sc L.~L. Duan, A.~L. Young, A.~Nishimura, and D.~B. Dunson}, {\em {Bayesian
  constraint relaxation}}, Biometrika, 107 (2019), pp.~191--204,
  \url{https://doi.org/10.1093/biomet/asz069}.

\bibitem{Garb15}
{\sc D.~Garber and E.~Hazan}, {\em Faster rates for the {F}rank-{W}olfe method
  over strongly-convex sets}, in Proceedings of the 32nd International
  Conference on Machine Learning, F.~Bach and D.~Blei, eds., vol.~37 of
  Proceedings of Machine Learning Research, Lille, France, 07--09 Jul 2015,
  PMLR, pp.~541--549.

\bibitem{Garb21}
{\sc D.~Garber, A.~Kaplan, and S.~Sabach}, {\em Improved complexities of
  conditional gradient-type methods with applications to robust matrix recovery
  problems}, Math. Program., 186 (2021), pp.~185--208.

\bibitem{Tran23}
{\sc K.-H. Giang-Tran, N.~Ho-Nguyen, and D.~Lee}, {\em Projection-free methods
  for solving convex bilevel optimization problems}, 2023,
  \url{https://arxiv.org/abs/2311.09738}.

\bibitem{Gide18}
{\sc G.~Gidel, F.~Pedregosa, and S.~Lacoste-Julien}, {\em {F}rank-{W}olfe
  splitting via augmented {L}agrangian method}, in Proceedings of the 21st
  International Conference on Artificial Intelligence and Statistics,
  A.~Storkey and F.~Perez-Cruz, eds., vol.~84 of Proceedings of Machine
  Learning Research, PMLR, 09--11 Apr 2018, pp.~1456--1465.

\bibitem{HeHa15}
{\sc N.~He and Z.~Harchaoui}, {\em Semi-proximal mirror-prox for nonsmooth
  composite minimization}, in Advances in Neural Information Processing
  Systems, C.~Cortes, N.~Lawrence, D.~Lee, M.~Sugiyama, and R.~Garnett, eds.,
  vol.~28, Curran Associates, Inc., 2015.

\bibitem{Kolm21}
{\sc V.~Kolmogorov and T.~Pock}, {\em One-sided {F}rank-{W}olfe algorithms for
  saddle problems}, in Proceedings of the 38th International Conference on
  Machine Learning, M.~Meila and T.~Zhang, eds., vol.~139 of Proceedings of
  Machine Learning Research, PMLR, 18--24 Jul 2021, pp.~5665--5675.

\bibitem{LanR21}
{\sc G.~Lan, E.~Romeijn, and Z.~Zhou}, {\em Conditional gradient methods for
  convex optimization with general affine and nonlinear constraints}, SIAM J.
  Optim., 31 (2021), pp.~2307--2339.

\bibitem{WonX19}
{\sc K.~Lange, J.-H. Won, and J.~Xu}, {\em Projection onto {M}inkowski sums
  with application to constrained learning}, in Proceedings of the 36th
  International Conference on Machine Learning, K.~Chaudhuri and
  R.~Salakhutdinov, eds., vol.~97 of Proceedings of Machine Learning Research,
  PMLR, 09--15 Jun 2019, pp.~3642--3651.

\bibitem{LiuL19}
{\sc Y.-F. Liu, X.~Liu, and S.~Ma}, {\em On the nonergodic convergence rate of
  an inexact augmented {L}agrangian framework for composite convex
  programming}, Math. Oper. Res., 44 (2019), pp.~632--650.

\bibitem{Loza79}
{\sc T.~Lozano-P{\'e}rez and M.~A. Wesley}, {\em An algorithm for planning
  collision-free paths among polyhedral obstacles}, Commun. ACM, 22 (1979),
  pp.~560--570.

\bibitem{Migd94}
{\sc A.~Migdalas}, {\em A regularization of the {F}rank—{W}olfe method and
  unification of certain nonlinear programming methods}, Math. Program., 65
  (1994), pp.~331--345.

\bibitem{Mill21}
{\sc R.~D. Millán, O.~P. Ferreira, and L.~F. Prudente}, {\em Alternating
  conditional gradient method for convex feasibility problems}, 80 (2021),
  pp.~245--–269, \url{https://doi.org/10.1080/10556788.2013.796683}.

\bibitem{MuZh16}
{\sc C.~Mu, Y.~Zhang, J.~Wright, and D.~Goldfarb}, {\em Scalable robust matrix
  recovery: {F}rank-{W}olfe meets proximal methods}, SIAM J. Sci. Comput., 38
  (2016), pp.~A3291--A3317.

\bibitem{Pedr20}
{\sc F.~Pedregosa, G.~Negiar, A.~Askari, and M.~Jaggi}, {\em Linearly
  convergent {F}rank-{W}olfe with backtracking line-search}, in International
  conference on artificial intelligence and statistics, PMLR, 2020, pp.~1--10.

\bibitem{Pier84}
{\sc G.~Pierra}, {\em Decomposition through formalization in a product space},
  Math. Program., 28 (1984), pp.~96--115.

\bibitem{Rich12}
{\sc E.~Richard, P.~Savalle, and N.~Vayatis}, {\em Estimation of simultaneously
  sparse and low rank matrices}, in Proceedings of the 29th International
  Conference on Machine Learning, {ICML} 2012, Edinburgh, Scotland, UK, June 26
  - July 1, 2012, icml.cc / Omnipress, 2012.

\bibitem{Rock09}
{\sc R.~T. Rockafellar and R.~J.-B. Wets}, {\em Variational Analysis},
  vol.~317, Springer Science \& Business Media, 2009.

\bibitem{Roth17}
{\sc T.~Rothvoss}, {\em The matching polytope has exponential extension
  complexity}, J. ACM, 64 (2017), \url{https://doi.org/10.1145/3127497}.

\bibitem{Fall19}
{\sc A.~Silveti-Falls, C.~Molinari, and J.~Fadili}, {\em Generalized
  conditional gradient with augmented {L}agrangian for composite minimization},
  SIAM J. Optim., 30 (2020), pp.~2687--2725,
  \url{https://doi.org/10.1137/19M1240460}.

\bibitem{Wirt23}
{\sc E.~Wirth, T.~Kerdreux, and S.~Pokutta}, {\em Acceleration of
  {F}rank-{W}olfe algorithms with open-loop step-sizes}, in Proceedings of The
  26th International Conference on Artificial Intelligence and Statistics,
  F.~Ruiz, J.~Dy, and J.-W. van~de Meent, eds., vol.~206 of Proceedings of
  Machine Learning Research, PMLR, 25--27 Apr 2023, pp.~77--100.

\bibitem{Yang16}
{\sc Z.~Yang, J.~Corander, and E.~Oja}, {\em Low-rank doubly stochastic matrix
  decomposition for cluster analysis}, J. Mach. Learn. Res., 17 (2016),
  pp.~6454--6478.

\bibitem{Yurt19}
{\sc A.~Yurtsever, O.~Fercoq, and V.~Cevher}, {\em A conditional-gradient-based
  augmented {L}agrangian framework}, in Proceedings of the 36th International
  Conference on Machine Learning, K.~Chaudhuri and R.~Salakhutdinov, eds.,
  vol.~97 of Proceedings of Machine Learning Research, PMLR, 09--15 Jun 2019,
  pp.~7272--7281.

\bibitem{Yurt18}
{\sc A.~Yurtsever, O.~Fercoq, F.~Locatello, and V.~Cevher}, {\em A conditional
  gradient framework for composite convex minimization with applications to
  semidefinite programming}, in Proceedings of the 35th International
  Conference on Machine Learning, J.~Dy and A.~Krause, eds., vol.~80 of
  Proceedings of Machine Learning Research, PMLR, 10--15 Jul 2018,
  pp.~5727--5736.

\bibitem{Zhan22}
{\sc Z.~Zhang, Z.~Zhai, and L.~Li}, {\em Graph refinement via simultaneously
  low-rank and sparse approximation}, SIAM J. Sci. Comput., 44 (2022),
  pp.~A1525--A1553.

\end{thebibliography}

\end{document}